\def\cal{\mathcal}
\def\Bbb{\mathbb}
\def\G{\Gamma}
\def\r{\rangle}
\def\l{\langle}
\def\t{\times}
\def\p{\partial}
\def\ul{\underline}
\def\flf{FICwF^{{\underline L}}}
\def\flv{FICwF^{{\underline L}}}
\def\ficwv{FICwF _{\cal {VC}}}
\def\flvu{FICwF^{{L}}}
\def\fpvu{FICwF^{{P}}}
\def\pvc{{\cal P} ^{L}_{\cal {VC}}}
\def\pvcu{{\cal P} ^{\ul L}_{\cal {VC}}}
\def\pfu{{\cal P} ^{\ul L}_{\cal {FIN}}}
\def\wtfinu{_w{\cal T} ^{\ul L}_{\cal {FIN}}}
\def\wttvu{_{wt}{\cal T} ^{\ul L}_{\cal {VC}}}
\def\wttv{_{wt}{\cal T}^{L}_{\cal {VC}}}
\def\wtfv{_{wf}{\cal T}^{L}_{\cal {VC}}}
\newtheorem{prop}{Proposition}[section]
\newtheorem{thm}{Theorem}[section]
\newtheorem{lemma}{Lemma}[section]
\newtheorem{cor}{Corollary}[section]
\newtheorem{defn}{Definition}[section]
\newtheorem{rem}{Remark}[section]
\numberwithin{equation}{section}
\begin{document}
\title[The isomorphism conjecture in $L$-theory]{The
 isomorphism conjecture in $L$-theory: graphs of groups}
\author[S.K. Roushon]{S. K. Roushon}
\address{School of Mathematics\\
Tata Institute\\
Homi Bhabha Road\\
Mumbai 400005, India}
\email{roushon@math.tifr.res.in} 
\urladdr{http://www.math.tifr.res.in/\~\ roushon/}
\thanks{April 06, 2010}
\begin{abstract}
We study the Fibered Isomorphism Conjecture of Farrell and Jones in $L$-theory 
for groups acting on trees. In several cases we prove the conjecture. This 
includes wreath products of abelian groups and free metabelian groups. We also 
deduce the conjecture in pseudoisotopy theory for these   
groups. Finally in $B$ of Theorem \ref{wreath} we prove the 
$L$-theory version of [\cite{FL}, Theorem 1.2].\end{abstract} 

\keywords{group action on trees, graph of groups, fibered 
isomorphism conjecture, $L$-theory, surgery groups}

\subjclass[2000]{Primary: 19G24, 19J25. Secondary: 55N91.}

\maketitle


\section{Introduction and statements of results} 
The classification problem for manifolds needs the study of two 
classes of obstruction groups. One is the lower $K$-groups 
(that is $K$-theory in dimension $\leq 1$)    
(pseudoisotopy-theory) and the other is the surgery 
$L$-groups (surgery theory) of the group ring of the 
fundamental group. 
The Farrell-Jones Isomorphism Conjecture gives an unified 
approach for computations and understanding of both these 
classes of groups. If this conjecture is true for  
the pseudoisotopy theory as well as for the surgery theory then 
among other results, for example, 
the Borel Conjecture, 
the Novikov Conjecture and the Hsiang Conjecture will 
be immediate consequences (see \cite{L}). The Farrell-Jones Conjecture 
predicts that one needs to consider only virtually cyclic 
subgroups of a group for computations of the above obstruction groups 
of the group. 

In this second article we are concerned about the Fibered 
Isomorphism Conjecture in surgery 
theory for groups acting (without inversion) on trees or equivalently 
for the fundamental groups of graphs of groups. 
Also we prove the conjecture for a certain class of 
virtually solvable groups 
in both the pseudoisotopy and surgery theory. 
The Fibered Conjecture in surgery theory for various 
classes of groups were proved in \cite{R3}. Also some machinery was set up 
in \cite{R3} which are crucial in this paper.

The Fibered Isomorphism Conjecture is stronger  
and has hereditary property. 
Also it 
allows one to consider groups 
with torsion in induction steps, although the final 
aim is to prove results for torsion free groups. This 
technique was first used in \cite{FR} to prove the 
conjecture in the pseudoisotopy case 
for Artin full braid groups. The general methods 
in \cite{R1} and \cite{R3} extend this feature 
further by considering the conjecture always for 
groups wreath product with finite groups. This 
simplifies proofs and   
prove stronger results.

In most of our results 
of the Fibered Isomorphism Conjecture in the equivariant 
homology theory (\cite{BL}) we need the assumption 
that $_{wt}{\cal T} _{\cal {VC}}$,  
${\cal P} _{\cal {VC}}$ and 
${\cal L} _{\cal {VC}}$ 
(see Definition \ref{property}) are satisfied.  
We checked before that these conditions are satisfied 
for the $L^{\l -\infty \r}$ and for 
the pseudoisotopy version of the conjecture. (See \cite{R1} 
and \cite{R3}). In [\cite{BLR}, Theorem 0.1] 
it is included that ${\cal L} _{\cal {VC}}$ and 
${\cal P} _{\cal {VC}}$ are  
satisfied for the $K$-theory case 
of the conjecture.

Formally, the conjecture in surgery theory 
says that certain assembly map in $L^{\l -\infty \r}$-theory  
is an isomorphism. A weaker version of the conjecture 
is that the assembly map is an isomorphism 
after tensoring with ${\Bbb Z}[\frac{1}{2}]$. This eliminates 
the UNiL groups of Cappell and the tensored assembly map 
can be proven to be 
an isomorphism for a larger class of groups. In addition to some 
general results we also prove 
the isomorphism of this tensored assembly map for 
a large class of groups acting on trees.

For two groups $G$ and $H$, $G\wr H$   
denotes the (restricted) wreath product with respect 
to the regular action of $H$ on $G^H$. By definition 
$G^H=\bigoplus_{h\in H} G_h$ where $G_h$'s 
are copies of $G$ indexed by $H$. And the action of 
$H$ on $G^H$ is such that $h'\in H$ sends an element 
of $G_h$ to the corresponding element of $G_{h(h')^{-1}}$.

If the Fibered Isomorphism 
Conjecture is true for $G\wr F$ for all finite groups $F$ for the 
$L^{\l -\infty \r}$, 
${\ul L}^{\l -\infty \r}=
L^{\l -\infty \r}\otimes_{\Bbb Z}{\Bbb Z}[\frac{1}{2}]$ or for the 
pseudoisotopy theory then we say respectively that 
the $\flvu$, $\flv$ or $\fpvu$ is true for $G$. 

Throughout the article a `graph' is assumed to be connected 
and locally finite. And groups are assumed to be discrete and 
countable.

\begin{defn}\label{closely}{\rm A finitely generated 
group $G$ is called 
{\it closely crystallographic} if it is of the form 
$A\rtimes C$ where $A$ is torsion free abelian, $C$ is infinite 
cyclic and $A$ is irreducible as a ${\Bbb Q}[C]$-module.}\end{defn}

When $C$ is virtually cyclic then $G$ was defined as 
{\it nearly crystallographic} in [\cite{FL}, Definition].

Our first theorem is the following.

\begin{thm} \label{wreath} 
A. Let $G$ be a group which contains a subgroup 
$H$ of finite index so that $H$ belongs to 
one of the following classes.

a. $A\wr B$ where $A$ and $B$ are both abelian.

b. Free metabelian groups. That is, it is a
quotient of a free group by the second derived subgroup.

c. $A\rtimes {\Bbb Z}$ where $A$ is torsion abelian.

Then the $\flvu$ and the $\fpvu$ are satisfied for $G$.

B. If the $\flvu$ ($\fpvu$) is true for all closely crystallographic 
groups then the $\flvu$ ($\fpvu$) is true for all virtually 
solvable groups.\end{thm}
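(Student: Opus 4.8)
The plan is to treat the two parts separately, using throughout the formal properties of the Fibered Isomorphism Conjecture assembled in \cite{R3}, all of which apply here because the conditions ${}_{wt}{\cal T}_{\cal {VC}}$, ${\cal P}_{\cal {VC}}$ and ${\cal L}_{\cal {VC}}$ are known for the $L^{\l -\infty \r}$- and the pseudoisotopy versions. The properties I will use repeatedly are: the \emph{hereditary} property (passage to subgroups); the \emph{finite index} property (since $\flvu$ and $\fpvu$ are conjectures about wreath products with every finite group, the conjecture for a finite-index subgroup yields it for the overgroup); the \emph{continuity} property (the conjecture for a group follows from the conjecture for all its finitely generated subgroups); and the \emph{fibering} property (for $1\to K\to G\to Q\to 1$, the conjecture for $G$ follows from the conjecture for $Q$ and for the preimage in $G$ of every virtually cyclic subgroup of $Q$). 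I will also use the graph-of-groups theorems of the present paper and, for the virtually poly-${\Bbb Z}$ groups that occur along the way, the results of Farrell and Jones.

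For part A, the continuity property reduces each of the classes a, b, c to its finitely generated members, and the finite-index property lets us work with the groups themselves (their wreath products with finite groups remain within reach of the same reductions). Each such group sits over its abelianization as an extension with abelian-by-${\Bbb Z}$ structure, and fibering over the finitely generated abelian quotient peels it down to groups of the form $A'\rtimes{\Bbb Z}$ with $A'$ abelian (class c is already of this form, with $A'$ torsion). Up to directed unions such a group is a proper ascending HNN extension of a subgroup which is abelian — or, when $A'$ is torsion, locally finite (in degenerate cases one only gets a directed union of virtually cyclic groups); the graph-of-groups theorems then give the conjecture once it is known for the vertex group, which follows from the continuity property together with the conjecture for finitely generated abelian (resp. finite) groups. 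For class b one first fibers $F/F''$ over $F/F'$ and then analyzes $F'/F''$ through its ${\Bbb Z}[F/F']$-module structure; keeping the intermediate extensions controlled is exactly where the machinery of \cite{R3} is used.

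For part B I would follow the scheme of [\cite{FL}, Theorem 1.2], transported to $L^{\l -\infty \r}$- and pseudoisotopy-theory by the apparatus above. First, by the continuity property, reduce to a finitely generated virtually solvable $G$; by the finite-index property we may pass to a solvable subgroup of finite index, and then induct on its derived length. Derived length $\leq 1$ gives a finitely generated virtually abelian, hence virtually poly-${\Bbb Z}$, group, for which the conjecture is known. For larger derived length, let $N$ be the last nontrivial term of the derived series — an abelian characteristic subgroup with $G/N$ of smaller derived length — so the conjecture holds for $G/N$ by induction; by the fibering property it remains to handle the preimage of each virtually cyclic $V\leq G/N$, which after passing to finite index has the form $N'\rtimes{\Bbb Z}$ with $N'$ abelian. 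Splitting off the torsion subgroup of $N'$ produces a (locally finite)-by-${\Bbb Z}$ group, treated by the graph-of-groups theorems as in part A, and reduces us to $N'$ torsion free; there, the continuity property over the finitely generated ${\Bbb Z}[{\Bbb Z}]$-submodules of $N'$, combined with a composition-series filtration of such a submodule (${\Bbb Z}[{\Bbb Z}]$ being Noetherian) and repeated application of the fibering property, reduces the whole question to: the conjecture for ${\Bbb Z}\wr{\Bbb Z}$ and similar ascending HNN extensions of countably generated free abelian groups, and for further (locally finite)-by-${\Bbb Z}$ pieces, all handled by the graph-of-groups theorems; and the conjecture for the subquotients $A\rtimes{\Bbb Z}$ with $A\otimes{\Bbb Q}$ an irreducible ${\Bbb Q}[{\Bbb Z}]$-module — that is, for closely crystallographic groups (Definition \ref{closely}), which is the hypothesis of part B.

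The step I expect to be the main obstacle, decisively so in part B, is organizing these nested inductions so that a complexity invariant — derived length, then the rank of the torsion-free abelian pieces, then the length of the ${\Bbb Q}[{\Bbb Z}]$-composition series — strictly decreases at each stage, while simultaneously choosing the finite-index passages so that the surviving subquotients become genuinely irreducible over ${\Bbb Q}[{\Bbb Z}]$ rather than merely shorter; the infinitely generated abelian normal subgroups cannot be split off and must be absorbed through the continuity property alone. Verifying at each step that the group at hand still satisfies the hereditary, finite-index, continuity and fibering properties — in particular that the wreath-with-finite formalism underlying $\flvu$ and $\fpvu$ is preserved, and that the subgroups over which the fibering property is applied are virtually cyclic — is where the hypotheses ${}_{wt}{\cal T}_{\cal {VC}}$, ${\cal P}_{\cal {VC}}$ and ${\cal L}_{\cal {VC}}$ for the two theories are invoked.
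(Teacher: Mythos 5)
Your overall architecture for part B (induction on derived length, reduction to metabelian $A\rtimes{\Bbb Z}$, splitting off torsion, a ${\Bbb Q}[{\Bbb Z}]$-module analysis whose irreducible subquotients are exactly the closely crystallographic groups) is the same as the paper's, which follows [\cite{FL}, Corollary 4.4]. But there is a genuine gap at the step that carries all the weight in part A.a and reappears in part B: you reduce to groups of the form $A'\rtimes{\Bbb Z}$ with $A'$ abelian and then dispose of them as ``proper ascending HNN extensions of an abelian (or locally finite) subgroup, handled by the graph-of-groups theorems once the vertex group is known.'' No such theorem exists, in this paper or elsewhere: the Fibered Isomorphism Conjecture for ascending HNN extensions is precisely the open problem the paper flags in Section 4 and Remark \ref{miller}, with ${\Bbb Z}*_{\Bbb Z}={\Bbb Z}[\frac{1}{2}]\rtimes{\Bbb Z}$ the simplest unknown case. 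Indeed the class ``$A'\rtimes{\Bbb Z}$, $A'$ abelian'' contains ${\Bbb Z}[\frac{1}{2}]\rtimes{\Bbb Z}$, so any argument that claims to settle that whole class proves too much. The graph-of-groups results of the paper that could conceivably apply (finite edge groups, polycyclic vertex groups with intersection property, finitely generated abelian vertex groups in the $\ul L$-setting only) all fail for an ascending HNN extension of an infinitely generated abelian group, and Theorem \ref{abelian} is in any case only a statement about $\flf$, not about the untensored $\flvu$ claimed in Theorem \ref{wreath}.

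The missing idea is the Farrell--Linnell decomposition (Lemma \ref{far-lin} in the paper, from the proof of [\cite{FL}, Lemma 4.3]): $A\wr{\Bbb Z}=\lim_{n}\bigl(A^{n+1}*_{A^n}\bigr)$, where the two embeddings $A^n\to A^{n+1}$ are the two shifts. This exhibits $A\wr{\Bbb Z}$ as a direct limit of \emph{non-ascending} HNN extensions over \emph{finitely generated} abelian groups, which is what makes the problem tractable: after passing to a finite-index subgroup $B^{n+1}*_{B^n}$ with $B$ free abelian, one fibers over the virtually polycyclic group $B^{n+1}\rtimes_{\alpha}{\Bbb Z}$ (where the conjecture is known), checks that the kernel acts on the Bass--Serre tree with trivial stabilizers and is therefore free, and identifies the preimage of an infinite cyclic subgroup as a direct limit of subgroups of free products of (free)$\times$(cyclic) groups, which the properties $_{wt}{\cal T}_{\cal{VC}}$, ${\cal P}_{\cal{VC}}$ and ${\cal L}_{\cal{VC}}$ handle. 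The same lemma is what treats ${\Bbb Z}_p^n\wr{\Bbb Z}$ in part A.c (there the pieces $B^{k+1}*_{B^k}$ are virtually free) and ${\Bbb Q}^n\wr{\Bbb Z}$ in part B. Without it, your reductions terminate at groups for which the conjecture is open, and both parts of the argument collapse. Note also that the reduction in A.a must stop at $A\wr{\Bbb Z}$ (the kernel carries the regular $ {\Bbb Z}$-action), not at an arbitrary $A'\rtimes{\Bbb Z}$; it is exactly the wreath-product structure that Lemma \ref{far-lin} exploits and that a general abelian-by-cyclic group lacks.
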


\begin{rem}\label{miller} {\rm It is not yet known if the 
Fibered Isomorphism 
Conjecture is true for all metabelian groups. The simplest case 
for which it is unknown is ${\Bbb Z}[\frac{1}{2}]\rtimes {\Bbb Z}$ 
where the action of 
$\Bbb Z$ on ${\Bbb Z}[\frac{1}{2}]$ is multiplication by $2$. One 
can show that ${\Bbb Z}[\frac{1}{2}]\rtimes {\Bbb Z}$ can not be 
embedded in $A\wr B$ where $A$ and $B$ are both abelian. I thank 
Chuck Miller for explaining this fact to me. On the other hand 
by a result of Magnus free metabelian groups can be embedded in 
such a wreath product.}\end{rem}

Although our method does not work to deduce the Fibered Isomorphism 
Conjecture 
in the closely crystallographic case,  
the Isomorphism Conjecture can be proven for these groups in surgery 
theory for all the decorations. 

\begin{thm}\label{ic} The Isomorphism Conjecture in $L^i$-theory 
is true for closely 
crystallographic groups where $i=\l -\infty \r, h$ or 
$s$.\end{thm}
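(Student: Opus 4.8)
The plan is to reduce the Isomorphism Conjecture for a closely crystallographic group $G = A \rtimes C$ (with $C$ infinite cyclic and $A$ torsion free abelian, irreducible as a ${\Bbb Q}[C]$-module) to known cases by exploiting the structure of $G$ as a group acting on a tree and, more fundamentally, as a poly-(torsion free abelian) group of Hirsch length $\mathrm{rk}(A) + 1$. Since $G$ is torsion free and virtually poly-cyclic is too strong to expect here (the rank of $A$ may be large and $A$ need not be finitely generated, e.g.\ $A = {\Bbb Z}[1/p]^n$), the key observation is that $G$ still fits the framework of \cite{FL}: a nearly (here closely) crystallographic group. The first step is to recall that by \cite{FL} the \emph{Fibered} conjecture, or at least the plain Isomorphism Conjecture, has already been established in $K$-theory and in $L^{\l -\infty \r}$-theory for such groups, or to reprove it using the action of $G$ on the Bass--Serre tree coming from the HNN/ascending-HNN description $G = A\rtimes_\phi C$. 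In that tree action the vertex stabilizers are conjugates of $A$, which is abelian, hence the Fibered Conjecture holds for the stabilizers by the results invoked in the introduction; then the general graph-of-groups machinery of \cite{R3} (the part not requiring the problematic passage through ${\cal P}_{\cal{VC}}$-type hypotheses for the whole group) transports the conjecture to $G$, at least in the $L^{\l -\infty \r}$ decoration.

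The second and main step is to pass from the $\l -\infty \r$ decoration to the $h$ and $s$ decorations. Here one uses the Rothenberg-type exact sequences relating $L^s$, $L^h$, and $L^{\l -\infty \r}$, together with the Tate cohomology (of ${\Bbb Z}/2$ acting on the relevant $\widetilde K$- and $\mathrm{Wh}$-groups) terms that measure the difference. Concretely, the Isomorphism Conjecture in all decorations is known to follow from the conjecture in the $\l -\infty \r$ decoration \emph{plus} vanishing (or controlled behavior) of the lower $K$-theory: $\widetilde K_0({\Bbb Z}[V])$ and $\mathrm{Wh}(V)$ and the negative $K$-groups $K_{-i}({\Bbb Z}[V])$ for all virtually cyclic subgroups $V$ of $G$ — and for a torsion free group these virtually cyclic subgroups are trivial or infinite cyclic, so $V \cong {\Bbb Z}$ or $V = 1$, for which all these $K$-groups vanish (Bass--Heller--Swan). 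Thus the Tate cohomology contributions are trivial and the assembly maps in $L^s$, $L^h$, $L^{\l -\infty \r}$ agree up to the corresponding change of the family's coefficient decoration; since the virtually cyclic family for a torsion free group essentially reduces to the trivial and infinite cyclic subgroups, the conjecture in one decoration upgrades to all three.

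The main obstacle I anticipate is precisely establishing the base case — the Isomorphism Conjecture (even just in $L^{\l -\infty \r}$) for the closely crystallographic group itself — because the paper explicitly says the \emph{Fibered} version is not accessible by their method in this case, so the argument must use the non-fibered, hereditary-free route. The delicate point is controlling the relative assembly map over the family of virtually cyclic subgroups versus the family of (finite or trivial) subgroups: one needs that $G$, acting on its Bass--Serre tree with abelian stabilizers, has the property that the only virtually cyclic subgroups are infinite cyclic, and that the Waldhausen/Cappell Nil and UNil contributions along the edges of the graph of groups either vanish or are absorbed. I would handle this by noting $A$ is torsion free abelian and $C \cong {\Bbb Z}$, so $G$ is torsion free solvable of finite Hirsch length, hence every virtually cyclic subgroup is infinite cyclic, killing the UNil terms after the fact and making the non-fibered conjecture for $G$ reducible to the conjecture for ${\Bbb Z}$ and for $A$ (abelian), both known. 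Assembling these pieces, together with the decoration-change argument of the second step, yields the theorem for $i = \l -\infty \r, h, s$.
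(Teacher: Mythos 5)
Your proposal has the right overall shape (reduce to $A$ and ${\Bbb Z}$, then change decorations via Rothenberg sequences), but it has two genuine gaps. First, the base case. You attribute the $L^{\l -\infty \r}$ statement either to \cite{FL} or to ``the general graph-of-groups machinery of \cite{R3}''; neither works. \cite{FL} is a $K$-theory paper and proves no $L$-theoretic assembly isomorphism, and the graph-of-groups machinery of \cite{R1}/\cite{R3} is precisely the \emph{fibered} machinery that the paper explicitly says fails for closely crystallographic groups (that is the whole reason Theorem \ref{ic} is only about the unfibered conjecture). The actual mechanism in the paper is concrete and different: since $A$ is torsion free abelian it is a direct limit of finitely generated free abelian groups, for which the assembly map $H_n(K(-,1),\ul{\Bbb L}_0)\to L_n({\Bbb Z}[-])$ is an isomorphism by Farrell--Hsiang \cite{FH}; continuity under direct limits gives the isomorphism for $A$; and then Ranicki's Mayer--Vietoris/Wang sequence for the twisted Laurent extension \cite{Ran}, $\cdots\to L_{n+1}(G)\to L_n(A)\to L_n(A)\to L_n(G)\to\cdots$, together with the corresponding sequence in homology, naturality of assembly, and the five lemma, yields the isomorphism for $G=A\rtimes{\Bbb Z}$. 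Your appeal to a Bass--Serre tree with abelian stabilizers gestures at this but never supplies the exact sequence and five-lemma argument that actually carries the reduction.

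Second, the decoration-change step. You only verify vanishing of $Wh$, $\tilde K_0$ and negative $K$-groups for the \emph{virtually cyclic subgroups} of $G$ (trivial or infinite cyclic, Bass--Heller--Swan). That controls the source of the assembly map but not the target: to identify $L^s_n({\Bbb Z}G)$, $L^h_n({\Bbb Z}G)$ and $L^{\l -\infty\r}_n({\Bbb Z}G)$ via the Rothenberg sequences you need the Tate cohomology of $Wh(G)$, $\tilde K_0({\Bbb Z}G)$ and $K_{-i}({\Bbb Z}G)$ for the closely crystallographic group $G$ itself to vanish. This is a nontrivial theorem and is the essential external input of the paper's proof: closely crystallographic groups are nearly crystallographic, hence linear, so [\cite{FL}, Theorem 1.1] gives $Wh(G)=\tilde K_0({\Bbb Z}[G])=K_i({\Bbb Z}[G])=0$ for $i<0$ (and likewise for $A$ by passing to the direct limit). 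Without this input both the decoration independence of $L_n({\Bbb Z}G)$ and, in fact, the clean form of the Wang-sequence comparison collapse, so the proposal as written does not close.
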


\begin{rem} {\rm Here we recall that in [\cite{FL}, Theorem 1.2] it was 
proved that the Fibered Isomorphism Conjecture in the pseudoisotopy 
theory is true for any 
virtually solvable groups if the same is true for any nearly 
crystallographic groups. Thus $B$ of Theorem \ref{wreath} 
is the $L$-theory version of [\cite{FL}, Theorem 1.2].}\end{rem}

The following is an Important Assertion 
in the Fibered Isomorphism Conjecture. In general 
it is not yet known. 

{\bf IA(K)}. $K$ is a normal subgroup of a group $G$ with 
infinite cyclic quotient. If the $\ficwv(K)$ is satisfied 
then the $\ficwv(G)$ is also satisfied.

\begin{thm}\label{residually1} Let $\cal G$ be a graph of 
groups with finite edge groups.

A. If the vertex groups are residually finite and 
the $\flvu$ is true for the vertex groups of $\cal G$ then 
the $\flvu$ is true for $\pi_1({\cal G})$.

B. Assume that there is a homomorphism $f:\pi_1({\cal G})\to Q$. Then 
the following statements hold.

i. If the kernels of the restriction of $f$ to the vertex groups
of $\cal G$ are finitely generated, residually finite and 
satisfies the $\flvu$ then
the $\flvu$ is true for $\pi_1({\cal G})$ provided the same 
is true for $Q$ and the IA(V) is satisfied for all vertex groups 
$V$ of $\cal G$  
in the $L$-theory case.

ii. If the kernels of the restriction of $f$ to the vertex groups 
of $\cal G$ 
are virtually polycyclic and the $\flvu$ is true for $Q$ then the 
$\flvu$ is true for $\pi_1({\cal G})$.\end{thm}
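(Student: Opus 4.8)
The plan is to prove Theorem \ref{residually1} by reducing each statement to a combination of (i) the hereditary behavior of the Fibered Isomorphism Conjecture under passage to graph-of-groups structures with finite edge groups, and (ii) the machinery established in \cite{R3} for controlling $\flvu$ along group extensions with controlled kernels. Throughout I would work, as the paper does, with the ``wreath product with finite groups'' version of the conjecture, so that all the statements are automatically hereditary with respect to subgroups and one may freely replace groups by finite-index overgroups or subgroups.

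\medskip

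For part A, the key point is that a graph of groups with finite edge groups has fundamental group acting on the Bass--Serre tree with finite edge stabilizers, so $\pi_1(\mathcal{G})$ is (after standard manipulations) an iterated combination --- amalgamated products and HNN extensions over finite subgroups --- of the vertex groups. First I would invoke the result from \cite{R3} that $\flvu$ passes from a family of groups to the fundamental group of a graph of groups built from them over finite edge groups, \emph{provided} one has enough control to run the induction; this is exactly where residual finiteness of the vertex groups enters, since it lets one pass to finite-index subgroups killing the relevant torsion and thereby realize the needed virtually-cyclic-subgroup bookkeeping (the conditions $_{wt}\mathcal{T}_{\mathcal{VC}}$, $\mathcal{P}_{\mathcal{VC}}$, $\mathcal{L}_{\mathcal{VC}}$, which are known in $L^{\langle-\infty\rangle}$ and pseudoisotopy). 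So A is essentially: residual finiteness $\Rightarrow$ the graph-of-groups induction of \cite{R3} applies $\Rightarrow$ $\flvu$ for $\pi_1(\mathcal{G})$.

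\medskip

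For part B I would use the homomorphism $f : \pi_1(\mathcal{G}) \to Q$ to set up a fibered transitivity argument: since $\flvu$ is true for $Q$ and the Fibered Conjecture has the transitivity property (true for the base, true for all preimages of cyclic/finite subgroups $\Rightarrow$ true for the total group), it suffices to establish $\flvu$ for $f^{-1}(C)$ for $C$ ranging over the relevant subgroups of $Q$. Each such preimage is again the fundamental group of a graph of groups with finite edge groups whose vertex groups are extensions of $\ker(f|_V)$-type groups --- and here the two sub-cases diverge. In case (ii) the kernels are virtually polycyclic, so the vertex groups of the preimage graph of groups are virtually-polycyclic-by-(virtually cyclic) hence themselves virtually polycyclic; $\flvu$ for these is already known (from \cite{R3}), and one finishes by applying part A's graph-of-groups step to them --- but one must check they are residually finite, which holds because virtually polycyclic groups are residually finite and the relevant extension stays polycyclic. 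In case (i) the kernels are only assumed finitely generated, residually finite, and to satisfy $\flvu$; here I would climb the cyclic quotient using the Important Assertion IA(V) for each vertex group $V$ (which is hypothesized), combined with the residual finiteness needed to reapply the graph-of-groups induction to the preimage.

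\medskip

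The main obstacle I anticipate is the bookkeeping in part B.i: controlling $\flvu$ for $f^{-1}(C)$ requires simultaneously (a) knowing it for the pieces $\ker(f|_V)$, (b) pushing up the infinite-cyclic quotient via IA(V), and (c) reassembling over the finite edge groups via the graph-of-groups machinery --- and these three steps interact, because the group obtained after step (b) must still be residually finite (or at least satisfy whatever hypothesis feeds step (c)) and must still carry a usable homomorphism to a cyclic group. Getting the order of operations right, and verifying that residual finiteness (or the weaker structural hypothesis actually used in \cite{R3}) is preserved at each stage --- in particular that finite-index passage does not destroy the graph-of-groups structure with finite edge groups --- is where the real work lies; the $L$-theory versus pseudoisotopy distinction should be harmless since the needed conditions $_{wt}\mathcal{T}_{\mathcal{VC}}$, $\mathcal{P}_{\mathcal{VC}}$, $\mathcal{L}_{\mathcal{VC}}$ are available in both settings.
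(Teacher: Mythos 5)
Your proposal follows essentially the same route as the paper: part A is exactly the graph-of-groups result of [\cite{R1}, $(1)$ of Proposition 2.2] combined with the $L$-theoretic verification of $\mathcal{P}_{\mathcal{VC}}$, $_{wt}\mathcal{T}_{\mathcal{VC}}$ and $\mathcal{L}_{\mathcal{VC}}$, and part B is the fibered transitivity of $(3)$ of Lemma \ref{inverse} applied to $f$, reducing to $f^{-1}(C)$ for $C$ infinite cyclic, which is again a graph of groups with finite edge groups whose vertex groups sit inside $K\rtimes\mathbb{Z}$ with $K=\ker(f|_V)$, handled by IA$(V)$ in case (i) and by the known results for virtually polycyclic groups (which are finitely generated, residually finite and satisfy both $\flvu$ and IA) in case (ii). The one step you flag but leave open --- that $K\rtimes\mathbb{Z}$ is again residually finite so that part A can be reapplied --- is closed in the paper by a short standard argument: since $K$ is finitely generated, any finite-index subgroup of $K$ contains a finite-index characteristic (hence normal in $K\rtimes\mathbb{Z}$) subgroup $K''$, and $(K\rtimes\mathbb{Z})/K''$ is virtually cyclic, hence residually finite.
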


Let us now recall from \cite{R1} the following definitions. 
$V_{\cal G}$ and $E_{\cal G}$ denotes 
respectively the set of all vertices and edges of a 
graph of groups $\cal G$.  
${\cal G}_x$ denotes a vertex or 
an edge group for $x$ a vertex or an edge respectively.
An edge $e$ of $\cal G$ is called a {\it finite edge}
if the edge group ${\cal G}_e$ is finite. $\cal G$ is called
{\it almost a tree
of groups} if there are finite edges $e_1,e_2,\ldots$ so that the
components of ${\cal
G}-\{e_1,e_2,\ldots \}$ are tree. If we remove all the finite edges
from a graph of groups then we call the components of the resulting
graph as {\it component subgraphs}.
A graph of groups $\cal G$ is 
said to satisfy the {\it intersection property} if each 
connected subgraph of groups
${\cal G}'$ of $\cal G$, $\cap_{e\in E_{{\cal G}'}}{\cal G}'_e$
contains a subgroup which is normal in 
$\pi_1({\cal G}')$ and is of finite index in some edge group.
A group $G$ is called {\it subgroup separable} if
for any finitely generated subgroup $H$ of $G$ and for any $g\in G-H$
there is a finite index normal subgroup $N$ of $G$ so that $H\subset N$
and $g\in G-N$.

\begin{thm} \label{graph-group} 
The $\flvu$ is true for $\pi_1({\cal G})$ where $\cal G$ 
satisfies one of the following.

A. $\cal G$ is a graph of poly-cyclic groups with intersection 
property. 

B. $\cal G$ is a graph of finitely generated nilpotent groups 
with $\pi_1({\cal G})$ subgroup separable.

C. The vertex groups are virtually cyclic and any component 
subgraph is either a single vertex or a tree of abelian groups.

D. The 
vertex and edge groups  
of any component subgraph are finitely generated abelian and 
of the same rank and any component subgraph is a tree.\end{thm}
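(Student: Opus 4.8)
The plan is to reduce each of the four cases to results already available in \cite{R3} (and the earlier parts of this paper) by verifying the structural hypotheses on the graph of groups $\cal G$ that make those machinery statements applicable, and then invoking the hereditary and fibered properties of $\flv$. The unifying strategy is: first establish $\flv$ for the fundamental group of each \emph{component subgraph} (which, after removing all finite edges, is either a single vertex or a tree of "nice" groups), and then glue along the finite edges using the graph-of-groups induction for finite edge groups — precisely the content of Theorem \ref{residually1}A, since in all four cases the relevant vertex groups are residually finite (poly-cyclic, finitely generated nilpotent, virtually cyclic, and finitely generated abelian groups all are). Thus the main work is case-by-case analysis of the component subgraphs.

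For A, I would use the intersection property: by definition each connected subgraph ${\cal G}'$ has $\cap_{e\in E_{{\cal G}'}}{\cal G}'_e$ containing a subgroup $N$ normal in $\pi_1({\cal G}')$ and of finite index in some edge group. Passing to $\pi_1({\cal G}')/N$ one gets a graph of groups with smaller (virtually trivial) edge groups, setting up an induction on the complexity of the graph; the base case is a single poly-cyclic vertex group, for which $\flv$ is known. One must also invoke $\mathrm{IA}(K)$-type assertions or the fact that $\flv$ is closed under the relevant extensions with poly-cyclic (hence virtually polycyclic) kernel — here Theorem \ref{residually1}B(ii) is the natural tool, taking $Q=\pi_1({\cal G}')/N$. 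For B, subgroup separability of $\pi_1({\cal G})$ is exactly the hypothesis that lets one apply the subgroup-separable graph-of-groups results of \cite{R3} to finitely generated nilpotent vertex groups (which themselves satisfy $\flv$ by \cite{R3}, being virtually polycyclic). For C and D, the component subgraphs are trees of abelian (resp.\ virtually cyclic) groups; here I would show $\pi_1$ of such a tree of groups is itself a group to which part A of Theorem \ref{wreath} or the poly-cyclic/abelian results of \cite{R3} apply — in case D the equal-rank hypothesis forces the edge inclusions to be finite-index, so $\pi_1$ of the tree is virtually a finitely generated abelian group (an iterated amalgam along finite-index subgroups), hence virtually polycyclic; in case C one similarly gets a virtually poly-$\mathbb Z$ group or applies the tree-of-abelian-groups analysis directly.

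The main obstacle will be case C (and the tree part of D): for a tree of abelian groups with \emph{infinite} edge groups of varying rank, $\pi_1$ need not be virtually polycyclic — e.g.\ amalgams like $\mathbb{Z}*_{\mathbb{Z}}\mathbb{Z}$ with non-surjective inclusions produce Baumslag–Solitar-type or non-polycyclic metabelian behavior — so one cannot simply cite the poly-cyclic machinery. Here I expect to need the detailed structure theory: such a $\pi_1$ embeds in, or maps onto, a group built from abelian pieces for which Theorem \ref{wreath}A (the $A\wr B$ case, via Magnus-type embeddings) or a direct transfinite/Mayer–Vietoris argument over the tree applies, combined with $\mathrm{IA}$ for the infinite-cyclic quotient directions. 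Checking that the hypotheses $_{wt}{\cal T}_{\cal{VC}}$, ${\cal P}_{\cal{VC}}$ and ${\cal L}_{\cal{VC}}$ propagate through these constructions — already verified for $L^{\l-\infty\r}$ and pseudoisotopy in \cite{R1,R3} — is routine but must be tracked carefully to legitimately apply the equivariant-homology version of the conjecture throughout.
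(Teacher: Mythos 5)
Your overall architecture --- treat each component subgraph first and then glue along the finite edges using residual finiteness --- is consistent with the paper's strategy for part C, and for parts A and B the paper in fact does little more than cite the general graph-of-groups propositions of \cite{R1} ((1) and (3) of Proposition 2.2 there) after checking via Lemmas \ref{pvc}, \ref{tvc} and \ref{lvc} that ${\cal P}^{L}_{\cal {VC}}$, $\wttv$ and ${\cal L}^{L}_{\cal {VC}}$ hold. But your treatment of C and D contains a genuine gap. You claim that in case D the equal-rank hypothesis makes $\pi_1$ of a component tree virtually a finitely generated abelian, hence virtually polycyclic, group. This is false: the amalgam of two copies of ${\Bbb Z}$ over subgroups of index $3$ on each side, $\langle a,b\mid a^3=b^3\rangle$, is a central extension of ${\Bbb Z}/3\ast{\Bbb Z}/3$ by ${\Bbb Z}$ and contains a nonabelian free group, so it is not virtually polycyclic; the same phenomenon occurs in case C (e.g.\ the trefoil group $\langle a,b\mid a^2=b^3\rangle$ arises from a tree of infinite cyclic groups with infinite edge groups). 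Nor are such groups metabelian --- the commutator subgroup of the trefoil group is free of rank $2$ --- so the fallback you propose for C, a Magnus-type embedding into $A\wr B$ followed by Theorem \ref{wreath}A, is not available. A smaller but similar slip occurs in A: Theorem \ref{residually1}B(ii) requires \emph{finite} edge groups, which the quotient graph $\pi_1({\cal G}')/N$ need not have, since $N$ is only guaranteed to be of finite index in \emph{some} edge group.

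The idea you are missing is the one encoded in [\cite{R1}, Propositions 2.2 and 2.3] and invoked in the paper. By the intersection property (supplied for case C by [\cite{R1}, Lemma 3.5]), the fundamental group $\Gamma$ of a component tree contains a finitely generated abelian (resp.\ polycyclic) normal subgroup $N$ of finite index in the edge and vertex groups, so that $\Gamma/N$ acts on the Bass--Serre tree with finite stabilizers and is therefore virtually free. One then applies $(3)$ of Lemma \ref{inverse} to $\Gamma\to\Gamma/N$, using $\wttv$ to handle the virtually free quotient; the preimages of infinite cyclic subgroups are (finitely generated abelian)-by-(virtually cyclic), so the whole argument ultimately rests on the $\flvu$ for ${\Bbb Z}^n\rtimes{\Bbb Z}$ and, more generally, for virtually polycyclic groups, which is exactly what [\cite{R3}, Theorem 1.1 and (iv) of Theorem 1.3] provides and what the paper is careful to flag in its proof of part D. Without this passage through a normal abelian subgroup with virtually free quotient, the component subgraphs in C and D cannot be disposed of by the polycyclic or wreath-product results alone.
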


Finally we state our results in the ${\ul L}^{\l -\infty \r}$-theory 
case.

Let $\cal D$ be a class of groups which is closed 
under isomorphism. For a graph $\cal G$ 
we denote by 
${\cal D}_{\cal G}$ the class of graphs of groups whose vertex 
and edge groups belong to $\cal D$ and the underlying graph 
is $\cal G$.

\begin{thm}\label{reduction1}
A. If the $\flf (\pi_1({\cal T}))$ 
is satisfied for all tree of groups $\cal T$ then 
the $\flf (\pi_1({\cal G}))$
is satisfied for all graph of groups $\cal G$.

B. If the $\flf (\pi_1({\cal H}))$ 
is satisfied for all ${\cal H}\in {\cal D}_{\cal T}$  
and for all tree $\cal T$ then the 
$\flf (\pi_1 ({\cal H}))$ is satisfied  
for all ${\cal H}\in {\cal D}_{\cal G}$ and 
for all graph $\cal G$.\end{thm}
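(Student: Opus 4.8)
The plan is to reduce an arbitrary graph of groups to a tree of groups by passing to the universal cover of the underlying graph, and then to reassemble the fundamental group by means of the transitivity principle, the decisive point being that the Important Assertion IA is available in $\underline L$-theory.

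For the reduction, let $\cal G$ be a graph of groups with connected, locally finite underlying graph $Y$, and put $G=\pi_1({\cal G})$. Collapsing every vertex group gives a surjection $p\colon G\twoheadrightarrow\pi_1(Y)=:F$ onto a free group ($F$ has finite rank if $Y$ is finite and is countable in general); set $K=\ker p$. Since all vertex and edge groups of $\cal G$ lie in $K$, the group $K$ acts without inversions on the Bass--Serre tree $T$ of $\cal G$ and the induced map $K\backslash T\to G\backslash T=Y$ is an honest covering of graphs, regular with deck group $F=\pi_1(Y)$; hence $K\backslash T$ is the universal cover $\widetilde Y$ of $Y$, which is a tree. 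By Bass--Serre theory $K=\pi_1(\widetilde{\cal G})$ for a tree of groups $\widetilde{\cal G}$ whose vertex and edge groups are isomorphic copies of those of $\cal G$. Thus in Part A the hypothesis immediately yields $\flf(K)$; in Part B one notes that $\widetilde{\cal G}\in{\cal D}_{\widetilde Y}$ (as $\cal D$ is closed under isomorphism) and $\widetilde Y$ is a tree, so again $\flf(K)$ holds.

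Next I would feed the extension $1\to K\to G\xrightarrow{p}F\to1$ into the transitivity principle in its wreath-product form (see \cite{R1}, also \cite{BL}): $\flf(G)$ follows provided $\flf(F)$ holds and $\flf(p^{-1}(C))$ holds for every virtually cyclic $C\le F$. Since $F$ is free, $\flf(F)$ is known (for $F$ of infinite rank it follows from the finitely generated case because $\flf$ commutes with directed colimits and $F=\varinjlim F_i$ over its finitely generated free subgroups). Again because $F$ is free, a virtually cyclic $C\le F$ is either trivial --- then $p^{-1}(C)=K$ and $\flf(K)$ was obtained above --- or infinite cyclic, in which case $p^{-1}(C)\cong K\rtimes\mathbb Z$ (the extension splits, $\mathbb Z$ being free). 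In the second case $K$ is normal in $K\rtimes\mathbb Z$ with infinite cyclic quotient, so IA gives $\flf(K)\Rightarrow\flf(K\rtimes\mathbb Z)$. This settles Part A, and together with the bookkeeping above it settles Part B; note that $K\rtimes\mathbb Z$ enters only through IA, so the fact that it is not itself the fundamental group of a tree of groups is irrelevant.

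The one substantive input --- and the step I expect to be the main obstacle --- is IA in the $\underline L^{\langle -\infty\rangle}$-setting with wreath products. In general IA is open, and in $K$-theory the Waldhausen Nil terms obstruct it; but for $\underline L^{\langle -\infty\rangle}=L^{\langle -\infty\rangle}\otimes_{\mathbb Z}\mathbb Z[\frac{1}{2}]$ the Cappell UNil groups are $2$-primary and vanish after inverting $2$, so the Wang sequence attached to the HNN presentation of $K\rtimes\mathbb Z$ is clean; promoting this to the equivariant-homology statement $\flf(K)\Rightarrow\flf(K\rtimes\mathbb Z)$, wreath products included, is where the machinery of \cite{R3} does the work. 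Everything else is formal: Bass--Serre bookkeeping in the reduction step, and the colimit and hereditary properties of $\flf$ recorded in \cite{R1}.
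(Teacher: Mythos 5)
Your reduction step coincides with the paper's: collapsing the vertex groups gives a surjection $f:\pi_1({\cal G})\to F$ onto a countable free group whose kernel $K$ is the universal covering tree of groups, with vertex and edge groups isomorphic to those of $\cal G$ (which also handles the bookkeeping for part B). The gap is in the reassembly step. You apply the fibering lemma relative to the family ${\cal {VC}}$, which forces you to control $p^{-1}(C)\cong K\rtimes {\Bbb Z}$ for $C$ infinite cyclic, and you discharge this by invoking IA in the ${\ul L}^{\l -\infty\r}$-setting. But IA is not available here: the paper records it as an open assertion and only ever uses it either as an explicit hypothesis (Theorem \ref{residually1} B(i)) or after verifying it for very special kernels (virtually polycyclic ones in B(ii)); in the present situation $K$ is an arbitrary tree of groups. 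Your heuristic --- UNil dies after inverting $2$, so the Wang/Ranicki sequence for the twisted Laurent extension is clean --- at best compares the $L$-groups of ${\Bbb Z}[K]$ and ${\Bbb Z}[K\rtimes{\Bbb Z}]$; it does not yield the fibered, wreath-product statement, which quantifies over all homomorphisms into $K\rtimes{\Bbb Z}$ and all induced families. A concrete witness that this step is not a formality: $K={\Bbb Z}[\frac{1}{2}]$ is abelian, so the $\flf(K)$ holds by Lemma \ref{abelian1}, yet the conjecture for ${\Bbb Z}[\frac{1}{2}]\rtimes{\Bbb Z}$ is explicitly stated to be open in Remark \ref{miller}; if IA held in ${\ul L}$-theory your argument would settle that case.

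The repair --- and the reason this theorem lives only in the ${\ul L}^{\l -\infty\r}$-theory section --- is to run $(3)$ of Lemma \ref{inverse} relative to the family ${\cal {FIN}}$ rather than ${\cal {VC}}$; the required properties ${\cal P}$, $_{wt}{\cal T}$ and ${\cal L}$ for ${\ul L}$ are supplied in that form by Lemmas \ref{pvc}, \ref{tvc} and \ref{lvc}. Since $F$ is free, hence torsion free, its only finite subgroup is trivial, so the only preimage to be checked is $p^{-1}(1)=K$, which is exactly the hypothesis; together with the $\flf(F)$ for the virtually free group $F$ (Lemma \ref{abelian1}) this proves A, and B follows by your bookkeeping. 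No group $K\rtimes{\Bbb Z}$, and hence no appeal to IA, ever enters.
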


\begin{thm} \label{abelian} Let $\cal G$ be a graph of 
finitely generated abelian 
groups. Then the $\flf$ is true for $\pi_1({\cal G})$.\end{thm}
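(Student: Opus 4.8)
The plan is to bootstrap from the two reduction theorems already available. \textbf{Step 1.} Let $\cal D$ be the class of all finitely generated abelian groups; it is closed under isomorphism, so part B of Theorem \ref{reduction1} applies, and it suffices to prove $\flf(\pi_1({\cal T}))$ for every tree of finitely generated abelian groups $\cal T$. \textbf{Step 2.} Fixing a base vertex, such a $\cal T$ is the directed union of its finite connected sub-trees of groups, and $\pi_1({\cal T})$ is the directed colimit of the fundamental groups of those finite sub-trees; since $\flf$, like every instance of FICwF, is preserved under directed colimits of groups (a standard continuity property of the equivariant homology theory of \cite{BL}; cf. \cite{R1}), we may assume $\cal T$ finite and induct on the number of edges of $\cal T$. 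If $\cal T$ has no edge then $\pi_1({\cal T})$ is a single finitely generated abelian group, hence (take $B$ trivial in class (a)) satisfies $\flvu$ by Theorem \ref{wreath}A and therefore the weaker statement $\flf$.

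\textbf{Step 3.} For the inductive step, pick a terminal vertex $v$ of the underlying tree with its unique incident edge $e$, and let ${\cal T}'$ be the sub-tree of groups obtained by deleting $v$ and $e$; then ${\cal T}'$ has one fewer edge and, by Bass--Serre theory, $\pi_1({\cal T})\cong\pi_1({\cal T}')\ast_{{\cal T}_e}{\cal T}_v$, an amalgamated free product in which ${\cal T}_e$ is a genuine subgroup both of the finitely generated abelian group ${\cal T}_v$ and of $\pi_1({\cal T}')$. The group $\pi_1({\cal T})$ acts without inversion on the Bass--Serre tree of this splitting, and every isotropy subgroup is conjugate to $\pi_1({\cal T}')$, to ${\cal T}_v$, or to ${\cal T}_e$; by the inductive hypothesis applied to ${\cal T}'$ and by the base case, $\flf$ holds for each of these. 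Now invoke the reduction principle for groups acting on trees --- equivalently, for amalgamated products --- set up in \cite{R1} and \cite{R3}: because the conditions ${}_{wt}{\cal T}_{\cal{VC}}$, ${\cal P}_{\cal{VC}}$ and ${\cal L}_{\cal{VC}}$ recalled in the introduction hold in $\underline L$-theory, $\flf(\pi_1({\cal T}))$ follows from $\flf$ of all the isotropy subgroups. This closes the induction, and together with Steps 1 and 2 it proves the theorem.

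\textbf{The main obstacle} is the last invocation in Step 3. In an action on a tree the virtually cyclic subgroups generated by a hyperbolic element are not subconjugate to any isotropy subgroup, so a naive ``pass from stabilizers to the whole group'' argument is false in general; one genuinely needs the structural input ${}_{wt}{\cal T}_{\cal{VC}}$, ${\cal P}_{\cal{VC}}$, ${\cal L}_{\cal{VC}}$, and this is exactly why the conclusion is stated for $\underline L$-theory (and why the analogous statement with $\flvu$ in place of $\flf$ is not claimed here). Two further points should be checked but cause no trouble: first, that the amalgam reduction used here does not secretly require the still-open assertion IA($K$) --- it does not, since for splittings over a tree, as opposed to ascending HNN extensions, the relevant reduction is the tree-action one; and second, that the wreath-product clause built into the definition of $\flf$ is respected at each step, which is automatic because the reduction lemma of \cite{R1}, \cite{R3} is formulated for FICwF, hence for $G\wr F$, throughout.
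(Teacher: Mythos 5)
Your Steps 1 and 2 are sound and agree with the paper's opening move: the reduction to trees is exactly $(B)$ of Theorem \ref{reduction1}, and the passage to finite trees is harmless given Lemmas \ref{lvc} and \ref{limit}. The gap is in Step 3. The three properties you invoke do not supply any ``reduction principle for amalgamated products'': by Definition \ref{property}, ${}_{wt}{\cal T}_{\cal {VC}}$ concerns only graphs of groups with \emph{trivial} edge groups; Lemma \ref{claim} upgrades this to free products; and Theorems \ref{residually1} and \ref{residually2} require \emph{finite} edge groups. Nothing stated in the paper (or recalled from \cite{R1}, \cite{R3}) lets you pass from $\flf({\cal T}_v)$, $\flf(\pi_1({\cal T}'))$ and $\flf({\cal T}_e)$ to $\flf\bigl(\pi_1({\cal T}')\ast_{{\cal T}_e}{\cal T}_v\bigr)$ when ${\cal T}_e$ is infinite. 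If such a general ``stabilizers imply the group'' principle held, most of the paper would be unnecessary; the fibered conjecture for a group acting on a tree with well-understood stabilizers is precisely the open problem being attacked (compare the discussion of ${\Bbb Z}\ast_{\Bbb Z}$ at the start of Section 4). Your caveat about hyperbolic elements correctly identifies the danger, but the remedy you offer --- the properties ${\cal P}$, ${\cal L}$ and ${}_{wt}{\cal T}$ --- only controls products, direct limits and trivial-edge-group splittings, so the induction on edges does not close.

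The paper's actual argument avoids amalgams entirely. For a tree ${\cal G}$ of finitely generated abelian groups, [\cite{R1}, Lemma 3.3] gives a surjection $p:\pi_1({\cal G})\to H_1(\pi_1({\cal G}),{\Bbb Z})$ whose restriction to each vertex group is injective; consequently $\ker p$ acts on the Bass--Serre tree with trivial stabilizers and is free. One then applies $(2)$ and $(3)$ of Lemma \ref{inverse} to $p$ together with Lemma \ref{abelian1}: the abelian target satisfies the conjecture, and in the ${\ul L}$-theory setting one only needs to control preimages of the relevant finite subgroups of the quotient, which are virtually free --- this is also why the statement is made for $\flf$ rather than $\flvu$, not the amalgam issue you describe. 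To repair your outline, replace Step 3 by this single global fibering over the abelianization; no edge-by-edge induction is then required.
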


\begin{thm} \label{residually2} 
Let $\cal G$ be a graph of groups with finite 
edge groups.

A. If the vertex groups are residually finite and 
the $\flf$ is true for the vertex groups of $\cal G$ then 
the $\flf$ is true for $\pi_1({\cal G})$.

B. Assume that there is a homomorphism $f:\pi_1({\cal G})\to Q$ and the 
$\flf$ is true for $Q$. 
If the kernels of the restriction of $f$ to the vertex groups 
of $\cal G$ are residually finite and satisfies the $\flf$ then 
the $\flf$ is true for $\pi_1({\cal G})$.\end{thm}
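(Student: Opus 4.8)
The plan is to mirror the structure of Theorem \ref{residually1} but now working with the fibered conjecture $\flf$ (i.e. the $\underline L^{\langle -\infty\rangle}$-theory version), where the situation is simpler because tensoring with ${\Bbb Z}[\frac12]$ kills the UNiL obstructions and, crucially, the conditions $\wttvu$, $\pvcu$ and ${\cal L}_{\cal VC}$ hold in this theory. For part A, I would first reduce via Theorem \ref{reduction1}(B) (applied to the class of residually finite groups satisfying $\flf$) to the case where the underlying graph is a tree of groups. Given a tree of groups $\cal T$ with finite edge groups and residually finite vertex groups for which $\flf$ holds, the standard approach is to build a finite-index subgroup of $\pi_1({\cal T})$ that splits as a graph of groups with trivial edge groups, i.e. a free product of conjugates of (finite-index subgroups of) the vertex groups with a free group. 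Concretely, since the edge groups are finite and the vertex groups residually finite, for each edge one can find a finite-index normal subgroup of the relevant vertex group intersecting the edge group trivially; a compactness/fibre-product argument over the (locally finite) graph produces a surjection of $\pi_1({\cal T})$ onto a finite group whose kernel $K$ meets every edge group trivially. Then $K$ acts on the Bass--Serre tree with trivial edge stabilisers, so $K$ is a free product of subgroups of the vertex groups and a free group.

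Next I would invoke the hereditary property of the Fibered Isomorphism Conjecture to pass $\flf$ from the vertex groups down to their subgroups appearing in this free-product decomposition, and then use the behaviour of $\flf$ under free products — this is exactly the kind of statement established in \cite{R1}, \cite{R3} (amalgamated/free products over finite, here trivial, groups), valid here because $\wttvu$, $\pvcu$, ${\cal L}_{\cal VC}$ are available in the $\underline L$-theory setting — to conclude $\flf(K)$. A further free-group factor is handled the same way (or absorbed into the induction on the number of vertices/edges). Since $K$ has finite index in $\pi_1({\cal T})$ and $\flf$ ascends along finite-index overgroups (again a feature recorded in \cite{R3}, using that $\flf$ is really the conjecture for all wreath products with finite groups), we get $\flf(\pi_1({\cal T}))$, and then $\flf(\pi_1({\cal G}))$ for the original graph by the first reduction.

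For part B, the argument is the relative version of the above. Given $f\colon\pi_1({\cal G})\to Q$ with $\flf(Q)$ true and the kernels $K_V = \ker(f|_V)$ residually finite and satisfying $\flf$ for each vertex group $V$, I would again reduce to the tree case and then pull back: the kernel $N=\ker f$ itself acts on the Bass--Serre tree, and its vertex stabilisers are exactly the $K_V$ (up to conjugacy) while its edge stabilisers are subgroups of the finite edge groups, hence finite. So $N=\pi_1({\cal N})$ is the fundamental group of a graph of groups with finite edge groups and vertex groups the residually finite, $\flf$-satisfying groups $K_V$; part A applies to give $\flf(N)$. Finally, because $\pi_1({\cal G})/N\cong f(\pi_1({\cal G}))\le Q$ and $\flf$ holds for $Q$ hence for its subgroups (hereditary property), one applies the general "extension" principle for $\flf$ — if $\flf$ holds for a normal subgroup $N$ and for the quotient, it holds for the group — which is available in the $\underline L$-theory and pseudoisotopy settings precisely because the relevant $\wttvu$, $\pvcu$, ${\cal L}_{\cal VC}$ conditions hold; note this is why no IA(V)-type hypothesis is needed here, in contrast with Theorem \ref{residually1}(B)(i).

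The main obstacle, and the step deserving the most care, is the fibre-product/compactness argument producing a \emph{single} finite quotient of $\pi_1({\cal T})$ whose kernel simultaneously avoids \emph{all} (infinitely many, in general) edge groups: one must use local finiteness of the graph together with residual finiteness of the vertex groups to organise the infinitely many local conditions into a coherent global homomorphism to a finite (or at least locally finite, then refined) group, and verify the resulting kernel really does decompose as the desired free product of vertex subgroups and a free group. The rest is bookkeeping with the hereditary and extension properties of $\flf$ already set up in \cite{R1} and \cite{R3}; the free-product step for $\flf$ is where those earlier papers' machinery is used essentially, and I would cite it rather than reprove it.
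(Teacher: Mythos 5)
Your overall strategy matches the paper's, which disposes of the theorem in three lines: part $A$ is exactly [\cite{R1}, $(1)$ of Proposition 2.2] combined with Lemmas \ref{pvc}, \ref{tvc} and \ref{lvc}, and part $B$ is $(2)$ and $(3)$ of Lemma \ref{inverse} applied to $f$ plus the observation that a virtually residually finite group is residually finite. Two points in your write-up need repair. In part $A$, the step you yourself flag as delicate --- producing a \emph{single} finite quotient of $\pi_1({\cal T})$ whose kernel meets every edge group trivially --- genuinely fails for infinite graphs: residual finiteness gives you, for each edge, some finite-index subgroup avoiding that edge group, but no compactness principle lets you satisfy infinitely many such conditions with one finite quotient (the required index would have to grow with the edge). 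The correct move, and the one built into [\cite{R1}, Proposition 2.2], is to first invoke the direct-limit property ${\cal L}^{\ul L}_{\cal {FIN}}$ of Lemma \ref{lvc} to write $\pi_1({\cal G})$ as a direct limit of fundamental groups of \emph{finite} subgraphs of groups (Lemma \ref{limit}), after which the intersection of finitely many finite-index subgroups does the job; your free-product-plus-free-group decomposition of the kernel and the appeal to $\wtfinu$, Lemma \ref{claim} and $(2)$ of Lemma \ref{inverse} then go through as you describe.

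In part $B$, your conclusion is right but your stated reason is wrong, and the error matters because it is exactly the point that distinguishes Theorem \ref{residually2}$(B)$ from Theorem \ref{residually1} $B(i)$. There is no general ``extension principle'' for the Fibered Conjecture of the form: FICwF for a normal subgroup and for the quotient implies FICwF for the group --- and the properties $\wttvu$, $\pvcu$, ${\cal L}_{\cal {VC}}$ do \emph{not} supply one, since they hold equally in the $L^{\l-\infty\r}$ case, where precisely this implication is the open assertion IA. What saves you here is that for $\ul L^{\l-\infty\r}$ the relevant family is ${\cal {FIN}}$ rather than ${\cal {VC}}$: in $(3)$ of Lemma \ref{inverse} one only needs $\flf(f^{-1}(H))$ for $H$ a \emph{finite} subgroup of $Q$, and each such $f^{-1}(H)$ contains $N=\ker f$ with finite index, so $(2)$ of Lemma \ref{inverse} (finite-index ascent, which is where the wreath-product-with-finite-groups formulation earns its keep) upgrades $\flf(N)$ to $\flf(f^{-1}(H))$. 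Equivalently, one can skip $N$ altogether and apply part $A$ directly to $f^{-1}(H)$, whose vertex groups contain the $K_V$ with finite index and are therefore residually finite and satisfy $\flf$; this is the paper's route. If you restate the extension step this way, your argument is complete.
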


\section{Statement of the Isomorphism Conjecture and some basic results}

Now we proceed to describe the formal statement of the conjecture (see 
\cite{BL}) and 
introduce some notations.

Let ${\cal H}^?_*$ be an equivariant homology theory with values in 
$R$-modules for $R$ a commutative associative ring with unit. In 
this article we are considering the special case $R={\Bbb Z}$.

In this section we always assume that a class of 
groups $\cal C$ is closed under 
isomorphisms, taking subgroups and taking quotients. We 
denote by ${\cal C}(G)$ the set of all subgroups of a group 
$G$ which belong to $\cal C$. In this case ${\cal C}(G)$ 
is said to be a {\it family of subgroups} of $G$. It follows that 
${\cal C}(G)$ is closed under taking subgroup and conjugation.
   
Given a group homomorphism $\phi:G\to H$ and $\cal C$ a family of 
subgroups of $H$ define $\phi^*{\cal C}$ by the family  
of subgroups $\{K<G\ |\ \phi (K)\in {\cal C}\}$ of $G$. For a family   
$\cal C$ of subgroups of a group $G$ there is a $G$-CW complex $E_{\cal 
C}(G)$ which is unique up to $G$-equivalence satisfying the property that 
for each $H\in {\cal C}$ the fixpoint set $E_{\cal C}(G)^H$ is 
contractible and $E_{\cal C}(G)^H=\emptyset$ for $H$ not in ${\cal C}$. 

The {\it 
Isomorphism Conjecture} for the pair $(G, {\cal C})$ states that the 
projection 
$p:E_{\cal C}(G)\to pt$ to the point $pt$ induces an isomorphism $${\cal 
H}^G_n(p):{\cal H}^G_n(E_{\cal C}(G))\simeq {\cal H}^G_n(pt)$$ for $n\in 
{\Bbb Z}$. 

And the {\it Fibered Isomorphism Conjecture} for the pair $(G, {\cal 
C})$ states that for any group homomorphism $\phi: K\to G$ the 
Isomorphism Conjecture is true for the pair $(K, \phi^*{\cal C})$.

\begin{defn} \label{definition} ([\cite{R1}, Definition 2.1]) 
{\rm Let $\cal C$ be a class of groups. If the (Fibered) 
Isomorphism Conjecture is
true for the pair $(G, {\cal C}(G))$ we say that the {\it (F)IC$_{\cal C}$ is true for $G$} or simply say {\it (F)IC$_{\cal C}(G)$ is satisfied}. Also we say that the {\it 
(F)ICwF$_{\cal C}(G)$ is satisfied} if
the {\it (F)IC$_{\cal C}$} is true for $G\wr H$ for any finite group
$H$.}\end{defn} 

Clearly, if $H\in {\cal C}$ then the (F)IC$_{\cal C}(H)$ is satisfied. 

Let us denote by $P$, ${\ul L}$ and 
${L}$, 
the equivariant homology 
theories arise for the pseudoisotopy theory, $\ul L^{\l -\infty \r}$-theory 
and for the $L^{\l -\infty \r}$-theory respectively. We also 
denote the corresponding conjectures with respect to the 
class of groups $\cal{VC}$ by (F)IC$^X$  
((F)ICwF$^X$) where $X=P, {\ul L}$ or $L$.

\begin{defn} \label{property} ([\cite{R1}, Definition 2.2]) 
{\rm We say that $_{wt}{\cal T} _{\cal C}$ 
is satisfied if for a graph of groups $\cal G$ with trivial edge
groups and the vertex
groups belonging to the class $\cal C$, the FICwF$ _{\cal
C}$ for $\pi_1({\cal G})$ is true . 

And we say that {\it ${\cal P} _{\cal C}$} is 
satisfied if for $G_1, G_2\in {\cal C}$ the product
$G_1\times G_2$ satisfies the FIC$ _{\cal C}$.

We further say that ${\cal L} _{\cal C}$ 
is satisfied if for any 
directed sequence of groups $\{G\}_{i\in I}$ for which 
the FIC$ _{\cal C}(G_i)$ is satisfied 
for $i\in I$ then 
the FIC$ _{\cal C}(\lim_{i\in I}G_i)$ 
is satisfied.

We denote the above properties for the equivariant 
homology theories $P$, $\ul L$ and $L$ with a super-script 
by the corresponding theory. For example ${\cal P}_{\cal C}$ 
for $L$ is denoted by ${\cal P}^L_{\cal C}$.}\end{defn}

We now recall some results we need to prove the Theorems.

\begin{lemma} \label{limit} Assume that 
${\cal L} _{\cal C}$ 
is satisfied. If for a  
directed sequence of groups $\{G\}_{i\in I}$  
the FICwF$ _{\cal C}(G_i)$ is satisfied 
for $i\in I$ then 
the FICwF$ _{\cal C}$ is true for $\lim_{i\in I}G_i$.\end{lemma}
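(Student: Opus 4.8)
The plan is to reduce the statement about the \emph{fibered} conjecture-with-finite-wreath-factors to the plain colimit property ${\cal L}_{\cal C}$ (Definition \ref{property}), using the fact that wreath product with a \emph{fixed finite} group commutes with directed colimits. First I would fix an arbitrary finite group $H$; by Definition \ref{definition} it suffices to show that the FIC$_{\cal C}$ holds for $G\wr H$, where $G=\lim_{i\in I}G_i$. The given directed system $\{G_i\}_{i\in I}$, with structure maps $\varphi_{ij}\colon G_i\to G_j$, induces a directed system $\{G_i\wr H\}_{i\in I}$ with structure maps $\varphi_{ij}\wr \mathrm{id}_H$ (functoriality of $-\wr H$), and the crucial point is the canonical isomorphism
\[
\lim_{i\in I}\bigl(G_i\wr H\bigr)\;\cong\;\Bigl(\lim_{i\in I}G_i\Bigr)\wr H\;=\;G\wr H .
\]
To see this, note that since $H$ is finite the restricted wreath product is $G_i\wr H=G_i^{H}\rtimes H=\bigl(\prod_{h\in H}(G_i)_h\bigr)\rtimes H$, with $H$ permuting the finitely many factors; a directed colimit commutes with the finite product $\prod_{h\in H}(-)_h$, and the short exact sequences $1\to G_i^H\to G_i\wr H\to H\to 1$ all have the same quotient $H$, so passing to the (exact) directed colimit gives $1\to G^H\to \lim_i(G_i\wr H)\to H\to 1$ with the same $H$-action, i.e. $\lim_i(G_i\wr H)=G\wr H$.

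Next, the hypothesis FICwF$_{\cal C}(G_i)$ means precisely, by Definition \ref{definition}, that FIC$_{\cal C}(G_i\wr H)$ is satisfied for this $H$ (indeed every finite $H$) and every $i\in I$. Hence $\{G_i\wr H\}_{i\in I}$ is a directed system of groups each of which satisfies FIC$_{\cal C}$, and the assumption that ${\cal L}_{\cal C}$ is satisfied yields FIC$_{\cal C}$ for the colimit $\lim_i(G_i\wr H)$. By the isomorphism of the previous paragraph this colimit is $G\wr H$, so FIC$_{\cal C}(G\wr H)$ holds. Since $H$ was an arbitrary finite group, FICwF$_{\cal C}(G)$ is satisfied, which is the claim.

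The only genuine work is the colimit-commutation isomorphism $\lim_i(G_i\wr H)\cong(\lim_i G_i)\wr H$; everything else is bookkeeping. The subtle point — and the one place where finiteness of $H$ is really used — is that the restricted wreath product involves the direct \emph{product} $G_i^{H}$ (a finite limit) rather than a coproduct, so commutation with the colimit is not formal from "colimits commute with colimits"; it rests on $I$ being directed together with $H$ being finite (so that finitely many "becomes trivial" or "admits a common lift" conditions can be realized simultaneously), exactly as in the standard description of directed colimits of groups by representatives.
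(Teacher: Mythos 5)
Your argument is correct and is exactly the paper's proof: the author simply records the identity $(\lim_{i\in I}G_i)\wr F=\lim_{i\in I}(G_i\wr F)$ for a finite group $F$ and invokes ${\cal L}_{\cal C}$, which is precisely your reduction. You have merely supplied the (correct) justification for that identity that the paper leaves implicit.
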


\begin{proof} Given a finite group $F$ note the following 
equality. $$(\lim_{i\in I}(G_i))\wr F=\lim_{i\in I}(G_i\wr F).$$
The proof now follows.\end{proof}

The following is easy to prove and is known as 
the hereditary property of the Fibered Isomorphism 
Conjecture.

\begin{lemma} \label{heredi} If the FIC$ _{\cal C}$ 
(FICwF$ _{\cal 
C}$) is true for a group $G$ then the 
FIC$ _{\cal C}$ 
(FICwF$ _{\cal C}$) 
is true for any subgroup $H$ of $G$.\end{lemma}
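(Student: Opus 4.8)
The plan is to exploit the fact that the Fibered Isomorphism Conjecture is, by its very definition, quantified over all group homomorphisms into the group in question, so that restriction along a subgroup inclusion comes essentially for free.

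First I would treat the FIC$_{\cal C}$ case. Let $i:H\hookrightarrow G$ be the inclusion. By Definition \ref{definition}, to prove FIC$_{\cal C}(H)$ it suffices to show that for every group homomorphism $\psi:K\to H$ the Isomorphism Conjecture holds for the pair $(K,\psi^*{\cal C}(H))$. Composing, we obtain $i\circ\psi:K\to G$, and since FIC$_{\cal C}(G)$ is satisfied, the Isomorphism Conjecture holds for $(K,(i\circ\psi)^*{\cal C}(G))$. The key step is then to verify the equality of families $(i\circ\psi)^*{\cal C}(G)=\psi^*{\cal C}(H)$: a subgroup $L<K$ lies in the left-hand family precisely when $i(\psi(L))\in{\cal C}(G)$, i.e. when $\psi(L)$, viewed as a subgroup of $G$, belongs to $\cal C$; since $i$ is injective, $i(\psi(L))$ is isomorphic to $\psi(L)$, and as $\cal C$ is closed under isomorphism this holds iff $\psi(L)\in{\cal C}$, i.e. iff $L\in\psi^*{\cal C}(H)$. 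Hence the two Isomorphism Conjectures in question coincide, and FIC$_{\cal C}(H)$ follows.

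For the FICwF$_{\cal C}$ case I would reduce to the previous paragraph. By Definition \ref{definition}, FICwF$_{\cal C}(H)$ amounts to FIC$_{\cal C}(H\wr F)$ for every finite group $F$. Since $H$ is a subgroup of $G$, we have $H\wr F=H^F\rtimes F\leq G^F\rtimes F=G\wr F$; and FICwF$_{\cal C}(G)$ gives FIC$_{\cal C}(G\wr F)$. Applying the FIC$_{\cal C}$ hereditary statement just proved to the subgroup $H\wr F<G\wr F$ yields FIC$_{\cal C}(H\wr F)$. As $F$ was arbitrary, FICwF$_{\cal C}(H)$ holds.

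The only point requiring a little care is the identification of the pullback families $(i\circ\psi)^*{\cal C}(G)$ and $\psi^*{\cal C}(H)$, which uses exactly that $\cal C$ is closed under isomorphism together with the injectivity of the inclusion $i$; everything else is purely formal. I do not expect a genuine obstacle here, which is why the statement is recorded as easy: the fibered formulation of the conjecture builds in precisely the flexibility needed to pass to subgroups.
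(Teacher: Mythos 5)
Your argument is correct and is exactly the standard one: the paper omits the proof entirely (it is stated as ``easy to prove''), and your identification of the pullback families $(i\circ\psi)^*{\cal C}(G)=\psi^*{\cal C}(H)$ via injectivity of the inclusion and closure of $\cal C$ under isomorphism, followed by the observation that $H\wr F$ embeds in $G\wr F$ for the wreath-product version, is precisely the intended reasoning. No gaps.
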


\begin{lemma}\label{inverse} ([\cite{R3}, Lemma 2.2]) 
Assume that ${\cal P}_{\cal C}$ is satisfied. 

(1). If $G_1$ and $G_2$ satisfy 
the FIC$ _{\cal C}$ (FICwF$_{\cal C}$) then $G_1\times G_2$ 
satisfies the FIC$ _{\cal C}$ (
FICwF$_{\cal C}$). 

(2). Let $G$ be a finite index subgroup of a group $K$. If  
the group $G$ satisfies the FICwF$_{\cal C}$ then $K$ also satisfies the 
FICwF$ _{\cal C}$.

(3). Let $p:G\to Q$ be a group homomorphism. If the FICwF$_{\cal C}$ is true for $Q$ and for $p^{-1}(H)$ for all 
$H\in {\cal C}(Q)$ then the FICwF$_{\cal C}$ is true for $G$. If ${\cal C}={\cal {VC}}$ then 
using $(2)$ it is enough to consider 
$H\in {\cal C}(Q)$ to be infinite cyclic.\end{lemma}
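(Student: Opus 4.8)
The three parts are the basic inheritance properties of the Fibered Isomorphism Conjecture, and the plan is to deduce them from three external facts: the Transitivity Principle for the Fibered conjecture --- if $\phi\colon G\twoheadrightarrow Q$ and the FIC$_{\cal C}$ holds for $Q$ and for $\phi^{-1}(C)$ for every $C\in{\cal C}(Q)$, then the FIC$_{\cal C}$ holds for $G$ (see \cite{R1}) ---, the Kaloujnine--Krasner embedding of an extension $1\to N\to K\to F\to 1$ with $F$ finite into $N\wr F$, and the elementary embeddings $A\wr C\hookrightarrow B\wr C$ for $A\le B$ and $(A\wr B)\wr C\hookrightarrow A\wr(B\wr C)$. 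Together with the hereditary property (Lemma \ref{heredi}), the definition of FICwF$_{\cal C}$, and the hypothesis ${\cal P}_{\cal C}$, these suffice; I would prove the statements in the order $(2)$, $(1)$, $(3)$, the hypothesis ${\cal P}_{\cal C}$ entering only in $(1)$.

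For $(2)$, pass to the normal core $N$ of $G$ in $K$. It is of finite index in $K$ and contained in $G$, hence satisfies the FICwF$_{\cal C}$ by Lemma \ref{heredi}, and $K/N$ is finite. By Kaloujnine--Krasner, $K$ embeds in $N\wr(K/N)$, which satisfies the FIC$_{\cal C}$ by the definition of FICwF$_{\cal C}(N)$; so $K$ satisfies the FIC$_{\cal C}$ by Lemma \ref{heredi}. For the wreath version, given a finite group $F'$ one has $K\wr F'\hookrightarrow (N\wr(K/N))\wr F'\hookrightarrow N\wr((K/N)\wr F')$ with $(K/N)\wr F'$ finite, so the last group satisfies the FIC$_{\cal C}$; hence so does $K\wr F'$, and $K$ satisfies the FICwF$_{\cal C}$.

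For $(1)$ I treat first the FIC$_{\cal C}$ statement. Applying the Transitivity Principle to the first projection $G_1\times G_2\to G_1$ reduces the FIC$_{\cal C}$ for $G_1\times G_2$ to the FIC$_{\cal C}$ for $C\times G_2$ with $C\in{\cal C}(G_1)$; applying it again to $C\times G_2\to G_2$ reduces this to the FIC$_{\cal C}$ for $C\times C'$ with $C\in{\cal C}(G_1)$ and $C'\in{\cal C}(G_2)$ --- a product of two groups of ${\cal C}$, which is exactly ${\cal P}_{\cal C}$. For the FICwF$_{\cal C}$ statement, fix a finite group $F$; then $(G_1\times G_2)\wr F$ embeds in $(G_1\wr F)\times(G_2\wr F)$, the two factors satisfy the FIC$_{\cal C}$ by hypothesis, so their product does by the case just settled, and hence $(G_1\times G_2)\wr F$ does by Lemma \ref{heredi}; as $F$ is arbitrary, $G_1\times G_2$ satisfies the FICwF$_{\cal C}$.

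For $(3)$, replacing $Q$ by $p(G)$ (Lemma \ref{heredi}) I may assume $p$ surjective, and fix a finite group $F$. The homomorphism $p$ induces a surjection $\tilde p\colon G\wr F\to Q\wr F$; I apply the Transitivity Principle to it. The FIC$_{\cal C}$ for $Q\wr F$ holds because $Q$ satisfies the FICwF$_{\cal C}$, so it remains to show that $\tilde p^{-1}(K)$ satisfies the FIC$_{\cal C}$ for each $K\in{\cal C}(Q\wr F)$. With $K_0=K\cap Q^F$, the subgroup $\tilde p^{-1}(K_0)$ has finite index in $\tilde p^{-1}(K)$, so by $(2)$ it suffices to establish the FICwF$_{\cal C}$ for $\tilde p^{-1}(K_0)$; and $\tilde p^{-1}(K_0)\le G^F$ is contained in $\prod_{f\in F}p^{-1}(K_{0,f})$, where $K_{0,f}\le Q$ is the image of $K_0$ under the $f$-th coordinate projection and hence a quotient of $K_0\in{\cal C}$, so $K_{0,f}\in{\cal C}(Q)$. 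Thus each factor satisfies the FICwF$_{\cal C}$ by hypothesis, the finite product does by $(1)$ and induction on $|F|$, and Lemma \ref{heredi} finishes. Finally, when ${\cal C}={\cal {VC}}$ the reduction to infinite cyclic $H$ is immediate from $(2)$, since every member of ${\cal {VC}}$ is either finite or virtually $\Bbb Z$. The deductions above are formal once the three external facts are in hand; the only real content --- and the place where all the homotopy theory hides --- is the Transitivity Principle itself, so that is where I would expect the work, not in the bookkeeping.
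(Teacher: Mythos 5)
Your proof is correct and follows essentially the same route as the source the paper cites for this lemma ([\cite{R3}, Lemma 2.2], going back to \cite{R1} and \cite{FR}): the fibered transitivity principle, the Kaloujnine--Krasner embedding for $(2)$, and the embeddings $(A\wr B)\wr C\hookrightarrow A\wr(B\wr C)$ and $(G_1\times G_2)\wr F\hookrightarrow (G_1\wr F)\times(G_2\wr F)$. The paper itself quotes the lemma without reproducing a proof, so there is nothing further to compare.
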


\begin{lemma}\label{pvc} ([\cite{R1}, Corollary 5.3] and [\cite{R3}, Lemma 2.11]) 
The properties ${\cal P} ^{P}_{\cal {VC}}$, $\pvc$, $\pvcu$ and  $\pfu$ are 
satisfied.\end{lemma}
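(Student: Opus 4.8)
The plan is to reduce all four properties to a single core case. The property $\pfu$ is immediate: if $G_1,G_2$ are finite then $G_1\times G_2$ is finite, hence lies in ${\cal {FIN}}$, and by the remark following Definition \ref{definition} the (Fibered) Isomorphism Conjecture with respect to a family always holds for a group belonging to that family. Next I would observe that, since $\ul L^{\l-\infty\r}=L^{\l-\infty\r}\otimes_{\Bbb Z}{\Bbb Z}[\frac{1}{2}]$ and ${\Bbb Z}[\frac{1}{2}]$ is flat over ${\Bbb Z}$, an assembly map that is an isomorphism in $L^{\l-\infty\r}$-theory stays an isomorphism after this tensoring; hence ${\rm FIC}^{L}(H)$ implies ${\rm FIC}^{\ul L}(H)$ for every group $H$, so $\pvc$ implies $\pvcu$. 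It therefore remains to prove $\pvc$ and ${\cal P}^{P}_{\cal {VC}}$, i.e., that for virtually cyclic $G_1,G_2$ the product $G_1\times G_2$ satisfies the Fibered Isomorphism Conjecture with respect to ${\cal {VC}}$ in $L^{\l-\infty\r}$-theory and in pseudoisotopy theory.

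For this the first step is the structural remark that $G_1\times G_2$ is always a virtually poly-${\Bbb Z}$ group of Hirsch length at most two. Indeed each $G_i$, being virtually cyclic, contains a finite-index subgroup isomorphic to $1$ or to ${\Bbb Z}$, so $G_1\times G_2$ contains a finite-index subgroup isomorphic to ${\Bbb Z}^{k}$ with $k\leq 2$; consequently $G_1\times G_2$ is finite, or virtually cyclic, or a two-dimensional crystallographic group. In the first two cases $G_1\times G_2$ itself lies in ${\cal {VC}}$ and there is nothing to prove. In the crystallographic case I would invoke the theorem of Farrell and Jones that the Fibered Isomorphism Conjecture in pseudoisotopy theory and in $L^{\l-\infty\r}$-theory holds for all virtually poly-${\Bbb Z}$ groups (equivalently, for the fundamental groups of flat manifolds and orbifolds), the decoration $\l-\infty\r$ being precisely the one for which the $L$-theoretic statement behaves well under the induction and change-of-rings arguments in its proof. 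Applying this to $G_1\times G_2$ completes $\pvc$ and ${\cal P}^{P}_{\cal {VC}}$, and with the two reductions above, the whole lemma.

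The hard part is the crystallographic (virtually ${\Bbb Z}^2$) case: unlike the formal reductions above it rests on the genuine geometric/controlled-topology input of Farrell and Jones and, in the $L$-theory case, on the delicate bookkeeping of decorations. I should also note that one cannot shortcut this by proving the conjecture only for ${\Bbb Z}^2$ and then promoting it to the finite-index overgroup $G_1\times G_2$: the finite-index transfer needed for that step (Lemma \ref{inverse}(2)) presupposes exactly the property $\pvc$, respectively ${\cal P}^{P}_{\cal {VC}}$, that one is trying to establish, so such an argument would be circular; the general virtually poly-${\Bbb Z}$ result has to be cited directly. Detailed verifications along these lines are carried out in \cite{R1} and \cite{R3}.
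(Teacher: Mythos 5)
Your argument is correct and is essentially the same as the paper's (which simply cites [\cite{R1}, Corollary 5.3] and [\cite{R3}, Lemma 2.11]): $\pfu$ is trivial since a product of finite groups lies in ${\cal {FIN}}$; $\pvcu$ follows from $\pvc$ by tensoring the assembly map with ${\Bbb Z}[\frac{1}{2}]$; and the substantive case is that $G_1\times G_2$ contains ${\Bbb Z}^k$, $k\leq 2$, with finite index, so one must quote the conjecture for virtually poly-${\Bbb Z}$ groups of Hirsch length $2$ directly --- your observation that $(2)$ of Lemma \ref{inverse} cannot be used here without circularity is exactly right. The one place you part company with the paper is the source of the $L$-theoretic input: the pseudoisotopy case does rest on Farrell--Jones, but the $L^{\l -\infty\r}$ case of their theorem is [\cite{FJ}, Remark 2.1.3], whose proof the paper explicitly treats as incomplete at the time of writing (see Remark \ref{finalrem} and the discussion preceding the ``alternate proofs''); this is precisely why the paper supplies an alternate proof of $\pvc$ replacing that input by [\cite{BL1}, Theorem B], which applies because a virtually ${\Bbb Z}^2$ group acts properly cocompactly by isometries on ${\Bbb R}^2$ and is therefore a CAT(0) group. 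So your proof of ${\cal P}^{P}_{\cal {VC}}$ stands as written, but for $\pvc$ you should either cite Bartels--L\"uck as the paper does or make the dependence on [\cite{FJ}, Remark 2.1.3] (completed in \cite{BFL}) explicit.
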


\begin{lemma}\label{tvc} ([\cite{R1}, Corollary 2.1] and [\cite{R3}, Lemma 2.14]) 
The properties $_{wt}{\cal T} ^{P}_{\cal {VC}}$, $\wtfinu$, $\wttv$ and $\wttvu$ are
satisfied.\end{lemma}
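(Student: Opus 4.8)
The plan is to invoke the two references and add only the bookkeeping that matches the four named properties to the cited statements. Recall that $_{wt}{\cal T}_{\cal C}$ asks: for a graph of groups $\cal G$ with trivial edge groups and vertex groups in $\cal C$, the FICwF$_{\cal C}$ holds for $\pi_1({\cal G})$. For $\cal C = {\cal VC}$ this $\pi_1$ is a free product (amalgamated over the trivial group), or more generally an iterated free product along a graph. So the content is: free products of virtually cyclic groups (equivalently, finitely generated virtually free groups, up to the wreath-product twist) satisfy the fibered conjecture with finite wreath factors, in each of the theories $P$, $\ul L$, $L$.

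First I would treat the pseudoisotopy case $_{wt}{\cal T}^P_{\cal VC}$ and the $L^{\langle -\infty\rangle}$ case $\wttv$: these are exactly what is recorded in [\cite{R1}, Corollary 2.1], once one knows (which we do, by Lemma \ref{pvc}) that ${\cal P}^P_{\cal VC}$ and $\pvc$ are satisfied, since the proof in \cite{R1} of the tree-of-groups property runs through the product property ${\cal P}_{\cal C}$ and through the structure results for graphs of groups with trivial edge groups. Next, for the $\ul L$-cases $\wtfinu$ and $\wttvu$ (the ${\Bbb Z}[\tfrac12]$-localized theory over the families $\cal FIN$ and $\cal VC$ respectively), I would cite [\cite{R3}, Lemma 2.14], whose hypotheses are again supplied by Lemma \ref{pvc} ($\pfu$ and $\pvcu$ are satisfied). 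In each instance the argument is: decompose $\pi_1({\cal G})$ as a graph of groups with trivial edge groups, apply the reduction for such decompositions (which reduces FICwF for the fundamental group to FICwF for the vertex groups plus the product property), and then feed in that the vertex groups lie in the family and hence trivially satisfy FICwF$_{\cal C}$ by the remark following Definition \ref{definition}.

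The one point that needs care — and the step I expect to be the main obstacle — is the passage through the wreath product with a finite group $F$: to prove FICwF one must handle $\pi_1({\cal G})\wr F$, which is no longer a free product of virtually cyclic groups but an extension $\bigl(\pi_1({\cal G})\bigr)^F \rtimes F$ with $F$ finite. Here one uses Lemma \ref{inverse}(3) to reduce from the group to the preimages of virtually cyclic (indeed infinite cyclic, by Lemma \ref{inverse}(2)) subgroups of $F$, combined with the fact that $\bigl(\pi_1({\cal G})\bigr)^F$ is again (the fundamental group of) a graph of groups with trivial edge groups whose vertex groups are finite products of virtually cyclic groups — at which stage the product property $\cal P_{\cal C}$ of Lemma \ref{pvc} is exactly what makes those vertex groups satisfy FIC$_{\cal C}$. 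This is precisely the mechanism already carried out in \cite{R1} and \cite{R3}, so for the present lemma it suffices to note that all the inputs those proofs require are now available, and the four asserted properties follow.
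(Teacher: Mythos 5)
Your proposal correctly locates the two citations, but the ``mechanism'' you add to justify them contains a genuine gap, and it sits exactly at the step you yourself flag as the main obstacle. Three concrete problems. (i) Membership of a vertex group $V$ in ${\cal {VC}}$ gives FIC$_{\cal {VC}}(V)$ for free (the remark after Definition \ref{definition}), \emph{not} FICwF$_{\cal {VC}}(V)$: the latter asserts FIC$_{\cal {VC}}$ for $V\wr F=V^F\rtimes F$, a group that is typically virtually ${\Bbb Z}^{|F|}$, and this is already nontrivial. (ii) Your structural claim that $(\pi_1({\cal G}))^F$ is again the fundamental group of a graph of groups with trivial edge groups and vertex groups finite products of virtually cyclic groups is false: already $F_2\times F_2$ is one-ended and so admits no splitting over the trivial group. (iii) The reduction you propose is circular: applying $(3)$ of Lemma \ref{inverse} to the projection $\pi_1({\cal G})\wr F\to F$ reduces you to the kernel $(\pi_1({\cal G}))^F$, and the product property $(1)$ of Lemma \ref{inverse} reduces that to FICwF of $\pi_1({\cal G})$ itself, which is what you set out to prove. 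No combination of the formal lemmas (\ref{inverse}, \ref{pvc}, \ref{lvc}) closes this loop, because FIC for $(\ast_i V_i)\wr F$ is genuinely hard: the group acts on a product of trees, and the only known arguments feed this into a deep geometric theorem, namely [\cite{FJ}, Theorem 2.1] or [\cite{BL1}, Theorem B] (the Farrell--Jones conjecture for CAT(0) groups).

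This missing external input is precisely why the paper does not stop at the two citations. It notes (see also Remark \ref{finalrem}) that the $L$-theoretic cases of [\cite{R3}, Lemma 2.14] rested on [\cite{FJ}, Theorem 2.1 and Remark 2.1.3], whose proof was not complete, and it therefore supplies alternate proofs: for $\wttv$ one reruns the last paragraph of the proof of [\cite{R1}, Proposition 2.4] with $P$ replaced by $L$ and with [\cite{BL1}, Theorem B] as the engine (plus the deductions of [\cite{R4}, Section 3]), and the ${\ul L}$-cases $\wtfinu$ and $\wttvu$ then follow since ${\ul L}$ is $L$ tensored with ${\Bbb Z}[\frac{1}{2}]$ and ${\cal {FIN}}\subset {\cal {VC}}$. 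To repair your write-up you must name this geometric theorem explicitly and route the wreath-product step through it, rather than through the formal reduction lemmas.
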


The proofs of the properties $\cal P$ and $\cal T$ in the  
$L$-theory case were given in \cite{R3} as referred using 
[\cite{FJ}, Theorem 2.1 and Remark 2.1.3]. See Remark 
\ref{finalrem} regarding the present 
status of the proof of [\cite{FJ}, Theorem 2.1 and Remark 2.1.3]. 
Here we sketch alternate proofs of 
the above properties using some recent result of 
Bartels and L\"{u}ck in \cite{BL1}. In fact  
we can even prove $\wtfv$. 

\begin{proof}[Alternate proofs of $\pvc$, $\wtfinu$ and $\wttv$] The proof 
of these facts in the pseudoisotopy case of the Fibered Isomorphism 
Conjecture were given in \cite{R1}. The same proofs also apply  
in the $L$-theory case if we use \cite{BL1}. We describe below the 
changes required.
 
For $\pvc$ replace $P$ by $L$ and use 
[\cite{BL1}, Theorem B] in the proof of [\cite{R1}, Corollary 5.3]. Also 
see [\cite{R4}, Section 3] for some more on this matter.

For a proof of $\wttv$ use the last paragraph of the proof 
of [\cite{R1}, Proposition 2.4] after replacing $P$ by $L$ 
and use [\cite{BL1}, Theorem B]. We also need to use some 
basic deductions from [\cite{R4}, Section 3].
The proof of the second property is immediate.\end{proof}

For the proof of $\wtfv$ we again use the proof of  
[\cite{R1}, Proposition 2.4].

The following Lemma is another ingredient for 
the proofs of the Theorems.

\begin{lemma}\label{lvc} ([\cite{FL}, Theorem 7.1]) 
The properties ${\cal L} ^{P}_{\cal {VC}}$, 
${\cal L} ^{L}_{\cal {VC}}$,
${\cal L} ^{\ul L}_{\cal {VC}}$ and 
${\cal L} ^{\ul L}_{\cal {FIN}}$ are satisfied.\end{lemma}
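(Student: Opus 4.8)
The plan is to establish that the Fibered Isomorphism Conjecture, for the equivariant homology theories $P$, $L$ and $\ul L$ and for the families ${\cal {VC}}$ and ${\cal {FIN}}$ in the listed combinations, is inherited under directed colimits of groups — this is exactly [\cite{FL}, Theorem 7.1] — and I would reprove it as follows. Let $\{G_i\}_{i\in I}$ be a directed system with structure maps $\psi_i\colon G_i\to G:=\lim_{i\in I}G_i$, and assume FIC$_{\cal C}(G_i)$ for every $i$. To verify FIC$_{\cal C}(G)$ one must prove the plain Isomorphism Conjecture for every pair $(K,\phi^*{\cal C})$ with $\phi\colon K\to G$. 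Form the fibre products $K_i:=K\times_G G_i$, with projections $q_i\colon K_i\to K$ and $\rho_i\colon K_i\to G_i$; since $G$ is a directed colimit, every finite subset of $K$ together with every finite set of relations among its elements already lives at a finite stage, so the $K_i$ form a directed system with $\lim_{i\in I}K_i=K$ and $q_i^*(\phi^*{\cal C})=\rho_i^*(\psi_i^*{\cal C})$. Applying FIC$_{\cal C}(G_i)$ to $\rho_i$ and to its restrictions to subgroups, and invoking the transitivity principle for the Isomorphism Conjecture, one obtains the Isomorphism Conjecture for $(K_i,\,q_i^*(\phi^*{\cal C}))$ for every $i$. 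So the task reduces to a purely non-fibered assertion: if $K=\lim_{i\in I}K_i$ and the Isomorphism Conjecture holds for $(K_i,q_i^*{\cal D})$ for a fixed family ${\cal D}$ of subgroups of $K$ of type ${\cal {FIN}}$ or ${\cal {VC}}$, then it holds for $(K,{\cal D})$.

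For this reduced assertion I would use two ingredients. First, one can choose the classifying spaces functorially so that $E_{\cal D}(K)$, as a $K$-CW-complex, is the colimit over $i$ of the $K$-spaces induced up from the $E_{q_i^*{\cal D}}(K_i)$; here it is essential that ${\cal D}$ is of type ${\cal {FIN}}$ or ${\cal {VC}}$, because a finite (respectively virtually cyclic) subgroup of the colimit $K$, being finitely presented, is conjugate to the image of one from a finite stage, so the fixed-point conditions characterising $E_{\cal D}(K)$ are detected at finite stages. Second, the equivariant homology theory in question is continuous, i.e.\ commutes with directed colimits of groups; granting this,
\[
{\cal H}^K_*\big(E_{\cal D}(K)\big)\;\cong\;\lim_{i\in I}{\cal H}^{K_i}_*\big(E_{q_i^*{\cal D}}(K_i)\big),\qquad {\cal H}^K_*(pt)\;\cong\;\lim_{i\in I}{\cal H}^{K_i}_*(pt),
\]
and the $K$-assembly map is identified with the colimit of the $K_i$-assembly maps. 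A directed colimit of isomorphisms of ${\Bbb Z}$-modules is an isomorphism, so the Isomorphism Conjecture for $(K,{\cal D})$ follows.

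It remains to supply continuity for the three theories. For the pseudoisotopy theory $P$ this is standard. For $L^{\langle -\infty \rangle}$-theory continuity is one of the defining virtues of the $\langle -\infty \rangle$-decoration: the functor $L^{\langle -\infty \rangle}$ on additive categories commutes with filtered colimits, so the associated equivariant homology theory is continuous — the same structural fact that is fed in through [\cite{BL1}, Theorem B] elsewhere in this paper. Finally $\ul L^{\langle -\infty \rangle}=L^{\langle -\infty \rangle}\otimes_{\Bbb Z}{\Bbb Z}[\frac{1}{2}]$, and since ${\Bbb Z}[\frac{1}{2}]$ is flat over ${\Bbb Z}$ and tensoring with it commutes with directed colimits, continuity of $\ul L$ follows from that of $L$; the whole argument then goes through verbatim for $\ul L$, giving both ${\cal L}^{\ul L}_{\cal {VC}}$ and ${\cal L}^{\ul L}_{\cal {FIN}}$. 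I expect the genuinely delicate point to be the compatibility of the models $E_{\cal D}(-)$ with the colimit together with the use of continuity — it is precisely this that fails for decorations such as $L^h$ or $L^s$, whose homology theories are not continuous, which is why those do not appear in the statement; the fibre-product and transitivity steps of the first paragraph are, by contrast, formal.
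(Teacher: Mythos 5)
The paper does not actually prove this lemma: it cites [\cite{FL}, Theorem 7.1], and (in the proof of Theorem \ref{graph-group}) records that continuity of the equivariant homology theory implies ${\cal L} _{\cal {VC}}$ via [\cite{R1}, Proposition 5.1]. Your argument is essentially the one those sources carry out — reduce the fibered statement to the plain Isomorphism Conjecture for the pulled-back families at the finite stages, realise the classifying space for the pulled-back family on $K=\lim_i K_i$ as a directed colimit of spaces induced up from the $K_i$, and use continuity of $P$, $L^{\l -\infty\r}$ and $\ul L^{\l -\infty\r}$ (the latter obtained from $L^{\l -\infty\r}$ by the flat tensor ${\Bbb Z}[\frac{1}{2}]$) to pass the isomorphism of assembly maps through the colimit. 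So the route is the intended one.

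Two steps are stated loosely enough to be worth correcting. First, the family in your ``reduced assertion'' is the pulled-back family $\phi^*{\cal C}$ on $K$, which is \emph{not} of type ${\cal {FIN}}$ or ${\cal {VC}}$ — it contains, for instance, all of $\ker\phi$ — so your justification that its members, ``being finitely presented, live at a finite stage'' does not apply to them. What actually saves the colimit description of the classifying space is that a subgroup $H\in\phi^*{\cal C}$ has \emph{image} $\phi(H)$ finite or virtually cyclic, hence finitely generated, hence contained in the image of some $G_i$, so that $H$ lies in (the image of) $K_i$. Second, in the generality of arbitrary directed systems, your appeal to the transitivity principle to upgrade the Isomorphism Conjecture from $(K_i,\rho_i^*({\cal C}(G_i)))$ to the larger family $(K_i,\rho_i^*\psi_i^*{\cal C})$ is not justified as written: the transitivity principle requires verifying the conjecture for every member of the larger family relative to the restriction of the smaller one, which is an additional nontrivial induction. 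For the directed systems this paper actually uses — increasing unions of subgroups, where $\psi_i^*{\cal C}={\cal C}(G_i)$ because ${\cal C}$ is closed under isomorphism — the two families coincide and the step is vacuous, so no harm results. (A minor misattribution: continuity of $L^{\l -\infty\r}$ under filtered colimits is a structural property of the $\l -\infty\r$-decorated functor; it is not what [\cite{BL1}, Theorem B] supplies.)
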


Finally we recall the following two lemmas.

\begin{lemma} \label{claim} ([\cite{R1}, Lemma 6.3]) 
Assume ${\cal P} _{\cal
C}$ and $_{wt}{\cal T} _{\cal
C}$ are
satisfied. If the FICwF$ _{\cal C}$ is true
for
$G_1$ and $G_2$ then 
the FICwF$ _{\cal C}$ 
is
true for $G_1 * G_2$.\end{lemma}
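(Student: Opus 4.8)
The plan is to realize $G_1 * G_2$ as the fundamental group of a very simple graph of groups, namely a tree with two vertices joined by a single edge, where the vertex groups are $G_1$ and $G_2$ and the edge group is trivial. Since the edge group is trivial, the property $_{wt}{\cal T} _{\cal C}$ applies directly: by hypothesis $_{wt}{\cal T} _{\cal C}$ is satisfied, which says precisely that for a graph of groups with trivial edge groups and vertex groups in $\cal C$, the FICwF$_{\cal C}$ holds for the fundamental group. However, this is not quite enough on its own, because $_{wt}{\cal T} _{\cal C}$ assumes the vertex groups lie in the class $\cal C$, whereas here we only assume the FICwF$_{\cal C}$ holds for $G_1$ and $G_2$. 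So the real content of the lemma is to bootstrap from "FICwF$_{\cal C}$ holds for the vertex groups" to "FICwF$_{\cal C}$ holds for $\pi_1$", using $_{wt}{\cal T} _{\cal C}$ together with ${\cal P} _{\cal C}$.

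The key steps, in order, are as follows. First I would pass to $(G_1 * G_2) \wr F$ for an arbitrary finite group $F$; by Definition \ref{definition} it suffices to prove FIC$_{\cal C}$ for this group. Next I would analyze the structure of $(G_1 * G_2) \wr F = (G_1 * G_2)^F \rtimes F$. The normal subgroup $(G_1 * G_2)^F$ is a direct product of $|F|$ copies of $G_1 * G_2$; using the Kurosh-type decomposition, or more directly by regarding $(G_1 * G_2)^F$ as the fundamental group of a graph of groups with trivial edge groups whose vertex groups are copies of $G_1$ and $G_2$, one sees that $(G_1 * G_2)^F$ itself is a free product of copies of $G_1$ and $G_2$ — hence $\pi_1$ of a tree (in fact a graph) of groups with trivial edge groups. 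Then I would invoke Lemma \ref{inverse}(3) applied to the projection $p : (G_1 * G_2)\wr F \to F$: since $F \in {\cal C}$, it suffices to check FICwF$_{\cal C}$ for $p^{-1}(H)$ for each $H \in {\cal C}(F)$; but $p^{-1}(H)$ is commensurable with, and built from, direct products of copies of $G_1, G_2$ amalgamated trivially, so using ${\cal P} _{\cal C}$ (via Lemma \ref{inverse}(1)) the hypothesis that FICwF$_{\cal C}$ holds for $G_1$ and $G_2$ yields FICwF$_{\cal C}$ for finite products of their copies, and then $_{wt}{\cal T} _{\cal C}$ upgrades this to FICwF$_{\cal C}$ for the free product.

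The main obstacle I expect is bookkeeping the interaction between the wreath product with $F$ and the free product structure: one must be careful that after taking the wreath product the resulting group is still $\pi_1$ of a graph of groups with trivial edge groups and with vertex groups to which the inductive hypothesis (now via ${\cal P} _{\cal C}$, for products) applies, and that the classes of groups involved are genuinely closed under the operations being used. The subtlety is that $_{wt}{\cal T} _{\cal C}$ as stated requires vertex groups in $\cal C$, not merely groups satisfying FICwF$_{\cal C}$; the standard device of \cite{R1} to get around this is to feed the vertex-group information in through Lemma \ref{inverse}(3) one prime (i.e. one subgroup $H \in {\cal C}(F)$) at a time, rather than trying to apply $_{wt}{\cal T} _{\cal C}$ naively — and the verification that this device actually closes the loop is the part that needs care. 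Once that is set up, the remaining steps are the routine applications of Lemmas \ref{heredi} and \ref{inverse} already available in the excerpt.
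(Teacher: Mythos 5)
The paper itself gives no proof of this lemma; it is imported verbatim from [\cite{R1}, Lemma 6.3], so I am comparing your proposal against the standard argument. As written, your proof has a genuine gap, and two of its steps actually fail. First, the structural claim is false: $(G_1*G_2)^F$ is a \emph{direct} product of $|F|$ copies of $G_1*G_2$, and a direct product of two nontrivial infinite groups is one-ended, hence freely indecomposable; it is not ``a free product of copies of $G_1$ and $G_2$'' and is not the fundamental group of a graph of groups with trivial edge groups in any useful sense. Second, the reduction is circular: applying $(3)$ of Lemma \ref{inverse} to $p:(G_1*G_2)\wr F\to F$ leaves you with fibers $p^{-1}(H)$ containing $(G_1*G_2)^F$, and handling those via ${\cal P}_{\cal C}$ sends you straight back to FICwF$_{\cal C}(G_1*G_2)$, the statement being proved. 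You correctly identify the real obstacle --- that $_{wt}{\cal T}_{\cal C}$ requires vertex groups lying \emph{in} ${\cal C}$, not merely vertex groups satisfying FICwF$_{\cal C}$ --- but your closing sentence (``$_{wt}{\cal T}_{\cal C}$ upgrades this to FICwF$_{\cal C}$ for the free product'') is precisely the illegitimate application of that property, and the loop is never closed. (Also, $F\in{\cal C}$ is not a hypothesis here; ${\cal {FIN}}\subset{\cal C}$ is assumed in Lemma \ref{abelian1} but not in this lemma.)

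The device that does close the loop is to fiber over the direct product rather than over $F$. Let $f:G_1*G_2\to G_1\times G_2$ be the canonical surjection. By ${\cal P}_{\cal C}$ and $(1)$ of Lemma \ref{inverse}, the target satisfies FICwF$_{\cal C}$, so by $(3)$ of Lemma \ref{inverse} it suffices to verify FICwF$_{\cal C}(f^{-1}(H))$ for each $H\in{\cal C}(G_1\times G_2)$. Now $f^{-1}(H)$ acts on the Bass--Serre tree of $G_1*G_2$ with trivial edge stabilizers, and each vertex stabilizer $f^{-1}(H)\cap gG_ig^{-1}$ is carried isomorphically by $f$ onto a subgroup of $H\cap G_i$ (since $f$ is injective on every conjugate of $G_i$ and $G_i$ is normal in $G_1\times G_2$); these stabilizers therefore belong to ${\cal C}$, because ${\cal C}$ is closed under taking subgroups. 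Hence $f^{-1}(H)$ is the fundamental group of a graph of groups with trivial edge groups and vertex groups in ${\cal C}$, and $_{wt}{\cal T}_{\cal C}$ now applies legitimately to give FICwF$_{\cal C}(f^{-1}(H))$. Note that no wreath product with a finite group is needed at the outset, since $(3)$ of Lemma \ref{inverse} and $_{wt}{\cal T}_{\cal C}$ are already stated in FICwF form; this is essentially the argument of [\cite{R1}, Lemma 6.3].
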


\begin{lemma} \label{abelian1} Let ${\cal {FIN}}\subset {\cal C}$. 
Assume ${\cal P} _{\cal {C}}$, 
${\cal L} _{\cal {C}}$ and 
$_{wt}{\cal T} _{\cal {C}}$ are satisfied. Then the 
FICwF$ _{\cal C}$ is true for $G$ where $G$ is either 
a virtually abelian group or a virtually free group.\end{lemma}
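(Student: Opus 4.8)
The plan is to reduce everything to three basic building blocks --- finite groups, ${\Bbb Z}$, and finitely generated free groups --- for which FICwF$_{\cal C}$ is immediate, and then to climb back up via the product, finite-index, and direct-limit lemmas already recorded. First I would pass to a finite-index subgroup: by definition a virtually abelian (resp.\ virtually free) group $G$ contains a subgroup $H$ of finite index that is abelian (resp.\ free), and $H$ is again countable. Since ${\cal P}_{\cal C}$ holds, Lemma \ref{inverse}(2) says that if FICwF$_{\cal C}$ is true for $H$ then it is true for $G$. So it remains to treat the cases where $G$ itself is abelian or free.

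For the free case, a finitely generated free group $F_n$ is the fundamental group of a finite rose, i.e.\ of a graph of groups over a (locally finite) graph with all vertex and edge groups trivial; since the trivial group lies in ${\cal C}$ (because ${\cal {FIN}}\subset {\cal C}$), the property $_{wt}{\cal T}_{\cal C}$ applies directly and gives FICwF$_{\cal C}(F_n)$. (Alternatively one writes $F_n={\Bbb Z}*\cdots*{\Bbb Z}$ and iterates Lemma \ref{claim}, which needs ${\cal P}_{\cal C}$ and $_{wt}{\cal T}_{\cal C}$.) An arbitrary countable free group is the direct limit of its finitely generated free subgroups, so Lemma \ref{limit}, which uses ${\cal L}_{\cal C}$, upgrades this to FICwF$_{\cal C}$ for every countable free group. (One could even realize a free group of countably infinite rank as $\pi_1$ of a locally finite graph and invoke $_{wt}{\cal T}_{\cal C}$ once more.)

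For the abelian case I would set up the building blocks: ${\Bbb Z}$ is the fundamental group of the graph of groups consisting of a single vertex and a single loop edge, all groups trivial, so $_{wt}{\cal T}_{\cal C}$ gives FICwF$_{\cal C}({\Bbb Z})$; and every finite group lies in ${\cal C}$, with its wreath product by a finite group again finite, so FICwF$_{\cal C}$ holds for finite groups. A finitely generated abelian group has the form ${\Bbb Z}^n\times (\text{finite})$, so repeated application of Lemma \ref{inverse}(1) (again using ${\cal P}_{\cal C}$) yields FICwF$_{\cal C}$ for all finitely generated abelian groups. Finally, any countable abelian group is the direct limit of its finitely generated subgroups, so Lemma \ref{limit} and ${\cal L}_{\cal C}$ finish the abelian case; combined with the first paragraph this completes the proof.

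The argument is essentially bookkeeping from the quoted lemmas, so there is no deep obstacle. The only points requiring care are: that the standing convention ``graphs are locally finite'' does not interfere with realizing ${\Bbb Z}$ and $F_n$ as fundamental groups of graphs of groups with trivial edge groups (it does not, since the relevant graphs are finite); and the passage from the finitely generated to the general case, which is precisely why ${\cal L}_{\cal C}$ is among the hypotheses --- groups such as ${\Bbb Z}[\tfrac12]$ or ${\Bbb Q}$ are virtually abelian but not finitely generated, and are handled only through the direct-limit step.
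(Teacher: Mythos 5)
Your argument is correct and follows essentially the same route as the paper's: reduce to a finite-index abelian or free subgroup via Lemma \ref{inverse}(2), to the finitely generated case via Lemma \ref{limit} and ${\cal L}_{\cal C}$, to ${\Bbb Z}$ (resp.\ a finitely generated free group) via Lemma \ref{inverse}(1) and the inclusion ${\cal {FIN}}\subset{\cal C}$, and finish with $_{wt}{\cal T}_{\cal C}$ by realizing these as fundamental groups of graphs of groups with trivial vertex and edge groups. The only differences are cosmetic (order of the reductions, and writing a finitely generated abelian group as ${\Bbb Z}^n\times(\text{finite})$ rather than passing to a free abelian subgroup of finite index).
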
 

\begin{proof}
By Lemma \ref{limit} we can assume that the group $G$ 
is finitely generated. 
At first assume that $G$ is virtually abelian. 
Using $(2)$ of Lemma \ref{inverse} 
we reduce to the case of finitely generated abelian groups. Since 
${\cal {FIN}}\subset \cal C$ and since the conjecture is true 
for members of $\cal C$ it is enough to prove the Lemma 
for finitely generated 
free abelian groups. Now $(1)$ of Lemma \ref{inverse} implies that 
we need to consider only the infinite cyclic group. Since the 
fundamental group of a graph of groups with trivial stabilizers 
is a free group, we are done using 
$_{wt}{\cal T} _{\cal C}$ and Lemma \ref{heredi}.

Next assume that $G$ is finitely generated and virtually free. Again 
using $(2)$ of Lemma \ref{inverse} it is enough to assume that $G$ is a  
finitely generated free groups. Now note that a free group is 
isomorphic to the fundamental group of a graph of groups 
whose vertex groups are trivial. Hence using $_{wt}{\cal T} _{\cal {C}}$ 
we complete the proof.\end{proof}

\section{Proofs of the Theorems} 
For the proof of Theorem \ref{wreath} we prove the following 
general Theorem for the conjecture 
in equivariant homology theory. The advantage of this   
general statement is that it works for the conjecture in 
any equivariant homology theory and to prove 
Theorem \ref{wreath} we just have to show that the hypotheses 
are satisfied both for the pseudoisotopy and 
for the $L^{\l-\infty\r}$-theory case.

\begin{thm}\label{general} Assume that 
$_{wt}{\cal T} _{\cal {VC}}$, 
${\cal P} _{\cal {VC}}$ and  
${\cal L} _{\cal {VC}}$ are 
satisfied. Then the following hold.

1. The $\ficwv$ is true for $G$ if $G$ contains $A\wr B$ 
as a subgroup of finite index, where $A$ and $B$  
are abelian groups.

2. The $\ficwv$ is true for any virtually free 
metabelian group.

3. The $\ficwv$ is true for $A\rtimes {\Bbb Z}$ where $A$ is 
torsion abelian.

4. The $\ficwv$ is true for any virtually solvable groups 
provided it is true for any closely crystallographic 
groups.\end{thm}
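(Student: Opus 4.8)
The plan is to deduce Theorem \ref{general} from the machinery collected in Section 2, handling the four assertions in turn and reducing each to the case of (virtually) abelian or (virtually) polycyclic groups, where Lemma \ref{abelian1} and Lemma \ref{inverse} apply. Throughout, $\cal C = \cal{VC}$, so ${\cal{FIN}}\subset \cal C$ and the hypotheses of Lemma \ref{abelian1} are met; in particular FICwF$_{\cal{VC}}$ holds for all virtually abelian and all virtually free groups, and by Lemma \ref{limit} we may freely pass to direct limits, hence assume all groups in sight are finitely generated.

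For assertion 1, by $(2)$ of Lemma \ref{inverse} it suffices to prove FICwF$_{\cal{VC}}$ for $A\wr B$ itself with $A,B$ abelian. Write $A\wr B = A^B \rtimes B$ and use $(3)$ of Lemma \ref{inverse}: since $B$ is abelian (hence virtually cyclic would be too strong in general, but $B$ is virtually abelian, so FICwF$_{\cal{VC}}(B)$ holds) and for each infinite cyclic $H < B$ the preimage $p^{-1}(H)$ is $A^B \rtimes H$, it remains to handle $A^B \rtimes \Bbb Z$. Now $A^B$ is a (possibly infinite rank) abelian group, and $A^B \rtimes \Bbb Z$ is the increasing union of subgroups $(\bigoplus_{b\in S}A_b)\rtimes \Bbb Z$ as $S$ ranges over finite $\Bbb Z$-invariant subsets of $B$ — or, more carefully, one writes $A^B\rtimes\Bbb Z = \lim_{\to} P_i$ where each $P_i$ is finitely generated virtually abelian (a finitely generated subgroup of $A^B\rtimes\Bbb Z$ is contained in such a $P_i$, and one checks these form a directed system). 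Then Lemma \ref{limit} together with the virtually abelian case of Lemma \ref{abelian1} finishes it. Assertion 3 is the special case analysis already contained in this argument: $A\rtimes\Bbb Z$ with $A$ torsion abelian is a direct limit of finite-by-cyclic, hence virtually cyclic, groups, so Lemma \ref{limit} and Lemma \ref{abelian1} apply directly.

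For assertion 2, by $(2)$ of Lemma \ref{inverse} reduce to a free metabelian group $M = F/F''$. By Magnus' embedding theorem (quoted in Remark \ref{miller}), $M$ embeds in $A\wr B$ with $A,B$ free abelian; by Lemma \ref{heredi} (hereditary property) FICwF$_{\cal{VC}}(M)$ follows from assertion 1. Assertion 4: let $G$ be virtually solvable; by $(2)$ of Lemma \ref{inverse} assume $G$ is solvable, and by Lemma \ref{limit} assume $G$ is finitely generated, hence polycyclic-by-(finitely-generated-abelian) after passing through the derived series — more precisely one runs an induction on the derived length. The key input is the structure theory of finitely generated solvable (or virtually solvable) groups, combined with IA(K)-type extension arguments and $(3)$ of Lemma \ref{inverse}, to cut $G$ down along normal subgroups with infinite cyclic quotients until the remaining pieces are either virtually polycyclic — handled by the closely crystallographic hypothesis via Lemma \ref{inverse}(3) again, since a finitely generated torsion-free abelian-by-cyclic piece that is $\Bbb Q[C]$-reducible splits into closely crystallographic factors — or virtually abelian. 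The main obstacle is precisely assertion 4: making the reduction of an arbitrary finitely generated virtually solvable group to a finite iteration of "extension by $\Bbb Z$" and "closely crystallographic" and "virtually abelian" steps requires a careful group-theoretic dévissage (of the kind carried out by Farrell–Lafont in \cite{FL} in the pseudoisotopy setting), controlling at each stage that the relevant subgroups are finitely generated so that Lemma \ref{limit} and the hypotheses $_{wt}{\cal T}_{\cal{VC}}$, ${\cal P}_{\cal{VC}}$, ${\cal L}_{\cal{VC}}$ remain usable. Assertions 1–3 are comparatively routine applications of Lemmas \ref{limit}, \ref{heredi}, \ref{inverse} and \ref{abelian1}.
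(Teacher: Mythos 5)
Your reductions for assertion 2 (Magnus embedding plus heredity) and the outer reductions for assertion 1 (pass to finite index, project to $B$, reduce to an infinite cyclic quotient) are in the spirit of the paper, but the core of your argument for assertions 1 and 3 rests on a false claim: that $A^B\rtimes {\Bbb Z}$ (equivalently $A\wr {\Bbb Z}$) is a direct limit of finitely generated \emph{virtually abelian} subgroups, and that $A\rtimes{\Bbb Z}$ with $A$ torsion abelian is a direct limit of virtually cyclic groups. Under the regular translation action, an infinite cyclic subgroup of $B$ has no nonempty finite invariant subset, so any subgroup of $A\wr{\Bbb Z}$ containing the stable letter and a nontrivial element of the base contains an entire infinite ${\Bbb Z}$-orbit of coordinates; e.g.\ ${\Bbb Z}\wr{\Bbb Z}$ and the lamplighter group ${\Bbb Z}_2\wr{\Bbb Z}$ are themselves finitely generated and are not virtually abelian (resp.\ not virtually cyclic), so they cannot be exhausted by such subgroups. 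This is precisely the difficulty the theorem has to overcome, and it is why the paper does not argue by subgroup exhaustion at this point. Instead it invokes the Farrell--Linnell decomposition $A\wr{\Bbb Z}=\lim_{n\to\infty}(A^{n+1}*_{A^n})$ (Lemma \ref{far-lin}), where the approximating groups are HNN extensions, and then proves the $\ficwv$ for each $H_n=A^{n+1}*_{A^n}$ by mapping it onto $A^{n+1}\rtimes_\alpha{\Bbb Z}$, showing the kernel is free via the tree action, and analyzing preimages of infinite cyclic subgroups as limits of subgroups of free products of (free)$\times{\Bbb Z}$ groups, using $_{wt}{\cal T}_{\cal{VC}}$, ${\cal P}_{\cal{VC}}$ and Lemma \ref{claim}. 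None of this machinery appears in your proposal, and without it assertions 1 and 3 do not go through. (Assertion 3 in the paper additionally requires an induction on the exponent of $A$ and the PID structure of ${\Bbb Z}_p[{\Bbb Z}]$ to reduce to ${\Bbb Z}_p^n\wr{\Bbb Z}$, where the HNN approximants are virtually free.)

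For assertion 4 you correctly identify the shape of the argument (induction on derived length, reduction to $A\rtimes{\Bbb Z}$, splitting $A$ into torsion and torsion-free parts, and using the ${\Bbb Q}[{\Bbb Z}]$-module decomposition to isolate closely crystallographic pieces $Y_0\rtimes{\Bbb Z}$ and wreath-product pieces ${\Bbb Q}^n\wr{\Bbb Z}$), but you leave it as a declared obstacle rather than carrying it out, and it depends on the broken assertions 1 and 3; also the reference is to Farrell--Linnell \cite{FL}, not Farrell--Lafont. As written, only assertion 2 is complete modulo assertion 1.
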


\begin{rem}{\rm In the algebraic $K$-theory version of 
the conjecture ${\cal P} _{\cal {VC}}$ and 
${\cal L} _{\cal {VC}}$ are known. See 
[\cite{BLR}, Theorem 0.1]. But it is not yet known if 
$_{wt}{\cal T} _{\cal {VC}}$ is also satisfied. 
If this is the case then together with the results in this paper 
most of the results from \cite{R1} and \cite{R3} will 
be true for the Fibered Isomorphism Conjecture in 
algebraic $K$-theory.}\end{rem}

Here we should recall that it was proved in 
[\cite{FL}, Lemma 4.3] that 
the pseudoisotopy version of the Fibered Isomorphism 
Conjecture is true for $({\Bbb Z}^n\wr {\Bbb Z})\wr F$, where 
$F$ is a finite group. That is, 
the $FICwF ^{P}$ is true for 
${\Bbb Z}^n\wr {\Bbb Z}$. 

\begin{proof}[Proof of Theorem \ref{general}]
Using $(2)$ of Lemma \ref{inverse} it is enough to prove the $\ficwv$ 
for $A\wr B$ where $B$ is infinite, for free metabelian groups 
and for solvable groups 
under the respective hypotheses as in $(1)$, $(2)$ and $(3)$. 
Also we will use the fact that the $\ficwv$ is true 
for virtually abelian groups during the proof. See 
Lemma \ref{abelian1}.

{\bf Proof of 1.} At first we reduce the situation to 
the case $A\wr {\Bbb Z}$. Let $B=\lim_{j\in J}B_i$ where $B_j$ are 
increasing sequence of finitely generated subgroups of $B$. 
Then we get the following 
equality. $$A\wr B=\lim_{j\in J}(A\wr B_j).$$ Therefore 
from now on we can assume that  $B$ is finitely generated. 
If $B$ has rank equal to $k$ then $B={\Bbb Z}^k\t F_1$ where $F_1$ is finite.
Hence $A\wr B$ contains 
$A^{{\Bbb Z}^k\t F_1}\rtimes {\Bbb Z}^k=A^{F_1}\wr {\Bbb Z}^k$ as a subgroup of 
finite index. Therefore we can use $(2)$ of Lemma \ref{inverse} to reduce the 
situation to the case $A\wr {\Bbb Z}^k$. If $k\geq 2$ then  
note the following equality. Let ${\Bbb Z}^k=B_1\t B_2$ where 
$B_1$ and $B_2$ are both nontrivial. Then 
$$A\wr (B_1\t B_2)=A^{B_1\t B_2}\rtimes (B_1\t B_2)
<(A^{B_1\t B_2}\rtimes B_1)\t (A^{B_1\t B_2}\rtimes B_2)$$$$
\simeq (A^{B_2}\wr B_1)\t (A^{B_1}\wr B_2).$$ 

In the above display, for $i=1,2$, the action of $B_i$ 
on $A^{B_1\t B_2}$ is 
the restriction of the regular action of $B_1\t B_2$ 
on $A^{B_1\t B_2}$. Note that the restricted action of $B_1$ ($B_2$) 
is again regular on $(A^{B_2})^{B_1}$ ($(A^{B_1})^{B_2}$). 
And the second inequality is easily 
checked by showing that 
the map $$A^{B_1\t B_2}\rtimes (B_1\t B_2)
\to (A^{B_1\t B_2}\rtimes B_1)\t (A^{B_1\t B_2}\rtimes B_2)$$ 
defined by $$(x, (b_1, b_2))\mapsto ((x, b_1), (x, b_2))$$
for $x\in A^{B_1\t B_2}$ and $(b_1, b_2)\in B_1\t B_2$ 
is an injective homomorphism.

Therefore using $(1)$ of Lemma \ref{inverse} and by the hereditary 
property it is enough to 
prove the $\ficwv$ for groups of the form $A\wr {\Bbb Z}$ where $A$ 
is abelian.

Since $A$ is countable abelian we can write it as a limit of finitely 
generated abelian subgroups $A_i$. Now note that 
$A\wr {\Bbb Z}=(\lim_{i\in I}A_i)\wr {\Bbb Z}
=\lim_{i\in I}(A_i\wr {\Bbb Z})$. Hence 
by Lemma \ref{limit} it is 
enough to prove the $\ficwv$ for $A_i\wr {\Bbb Z}$. 

Therefore from now on we can assume that $A$ is finitely 
generated.

Next note the equality in the following Lemma. 
This was obtained in the 
proof of [\cite{FL}, Lemma 4.3]. 

\begin{lemma}\label{far-lin} Let $A$ be an abelian group. Then 
the following equality holds.

$$A\wr {\Bbb Z}=
\lim_{n\to\infty} (A^{n+1}*_{A^n}).$$ Where the HNN extension 
$A^{n+1}*_{A^n}=H_n$ (say) is obtained using the following two 
inclusions. $$i_j:A^n\to A^{n+1}.$$ 
$$i_1(a_1,\ldots , a_n)\mapsto (a_1,\ldots , a_n, 0).$$
$$i_2(a_1,\ldots , a_n)\mapsto (0, a_1,\ldots , a_n).$$\end{lemma}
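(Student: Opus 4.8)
The plan is to identify both sides of the claimed equality through a common two-generator presentation and then to recognise the right-hand side as the direct limit of a chain of quotients of a single free product. First I would rewrite the HNN extension. Write $A^{n+1}=A_0\oplus\cdots\oplus A_n$ with $A_0$ the first factor, so that the two inclusions $i_1,i_2\colon A^n\to A^{n+1}$ identify $A^n$ with $A_0\oplus\cdots\oplus A_{n-1}$ and with $A_1\oplus\cdots\oplus A_n$ respectively; the HNN relation $t_n\,i_1(c)\,t_n^{-1}=i_2(c)$ then reads $t_nA_it_n^{-1}=A_{i+1}$ for $0\le i\le n-1$. Starting from the standard HNN presentation of $H_n$ and applying a Tietze transformation that eliminates $A_1,\dots,A_n$ via $A_i:=t_n^{\,i}A_0t_n^{-i}$, one obtains
$$H_n\ \cong\ \langle\, A_0,\ t_n\ \mid\ \mathrm{rel}(A),\ [\,a,\ t_n^{\,k}\,b\,t_n^{-k}\,]=1\ \ (a,b\in A_0,\ 1\le k\le n)\,\rangle ,$$
since the HNN relation for $i=n-1$ becomes tautological, the copies of $\mathrm{rel}(A)$ on $A_1,\dots,A_n$ become consequences of $\mathrm{rel}(A)$ on $A_0$, and $[A_i,A_j]=1$ turns into $[\,a,\,t_n^{\,j-i}bt_n^{-(j-i)}\,]=1$. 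In exactly the same way, $A\wr{\Bbb Z}=A^{{\Bbb Z}}\rtimes{\Bbb Z}$ (with $A_i=t^iA_0t^{-i}$) has the presentation $\langle A_0,t\mid \mathrm{rel}(A),\ [\,a,\,t^kbt^{-k}\,]=1\ (a,b\in A_0,\ k\ge 1)\rangle$; imposing these commutator relations only for $k\ge 1$ is enough because conjugating such a relation by $t^{-k}$ produces the corresponding relation for $-k$.

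Next I would assemble the directed system. Put $F:=\langle A_0,t\mid \mathrm{rel}(A)\rangle\cong A*{\Bbb Z}$, and for $1\le n\le\infty$ let $R_n\trianglelefteq F$ be the normal closure of $\{\,[\,a,\,t^kbt^{-k}\,] : a,b\in A_0,\ 1\le k\le n\,\}$ (with the reading $1\le k<\infty$ when $n=\infty$). By the previous step $H_n\cong F/R_n$, with $t_n$ corresponding to the image of $t$, and $A\wr{\Bbb Z}\cong F/R_\infty$. Since $R_1\subseteq R_2\subseteq\cdots$, the induced surjections $\psi_n\colon H_n\cong F/R_n\to F/R_{n+1}\cong H_{n+1}$ form a directed system; unwinding the identifications, $\psi_n$ is exactly the homomorphism that includes the base $A^{n+1}$ of $H_n$ into the base $A^{n+2}$ of $H_{n+1}$ by appending a zero and sends $t_n$ to $t_{n+1}$, and it is well defined because the relations $t_{n+1}A_it_{n+1}^{-1}=A_{i+1}$ for $0\le i\le n-1$ are among the HNN relations of $H_{n+1}$. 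So the $\psi_n$ are the natural structure maps of the system $\{A^{n+1}*_{A^n}\}_n$, and since $\bigcup_n R_n=R_\infty$,
$$\lim_{n\to\infty} H_n\ =\ \lim_{n\to\infty} F/R_n\ =\ F/(\,{\textstyle\bigcup_n}R_n\,)\ =\ F/R_\infty\ =\ A\wr{\Bbb Z},$$
which is the asserted identity.

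Everything after the first step is formal. The one point that requires care is the presentation bookkeeping for $H_n$: one must apply the HNN presentation with the paper's convention for the stable letter, correctly match the inclusions $i_1,i_2$ with the relations $t_nA_it_n^{-1}=A_{i+1}$ (equivalently, fix which way $t_n$ shifts the factors of $A^{n+1}$), and verify that the Tietze elimination of $A_1,\dots,A_n$ removes exactly the redundant relations, leaving precisely the commutators $[\,a,\,t_n^{\,k}bt_n^{-k}\,]$ for $1\le k\le n$ and no others. I expect this to be the only real obstacle — routine, but the kind of thing where an off-by-one in the index range would be easy to commit.
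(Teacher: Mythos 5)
Your argument is correct. Note that the paper does not actually prove this lemma --- it only points to the proof of [\cite{FL}, Lemma 4.3] --- so there is no in-paper argument to compare against; what you have written is the standard presentation-theoretic proof and is, in substance, the one behind the cited reference. The two presentations you derive, $H_n\cong\langle A_0,t_n\mid \mathrm{rel}(A),\ [a,t_n^kbt_n^{-k}]\ (1\le k\le n)\rangle$ and $A\wr{\Bbb Z}\cong\langle A_0,t\mid \mathrm{rel}(A),\ [a,t^kbt^{-k}]\ (k\ge 1)\rangle$, are both right, and your identification of the structure maps $\psi_n$ as the quotient maps $F/R_n\to F/R_{n+1}$ (equivalently: append a zero on the base, $t_n\mapsto t_{n+1}$) is the one genuinely delicate point --- the limit here is along \emph{surjections}, not inclusions (each $H_n$ contains nonabelian free subgroups while $A\wr{\Bbb Z}$ is metabelian, so the maps cannot be injective), and you handle this correctly via $\bigcup_nR_n=R_\infty$. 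Two cosmetic quibbles: in the Tietze step it is not only the relation for $i=n-1$ that becomes redundant --- each of the $n$ HNN relations is consumed as the definition of the generator $A_{i+1}$ being eliminated, so all of them disappear; and the paper's convention for the ${\Bbb Z}$-action on $A^{\Bbb Z}$ shifts indices the opposite way from your $A_i=t^iA_0t^{-i}$, which only amounts to replacing $t$ by $t^{-1}$ and affects nothing.
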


Again by Lemma \ref{limit} we 
need to prove the $\ficwv$ for $H_n$.

We have a surjective homomorphism 
$p:H_n\to A^{n+1}\rtimes_{\alpha}{\Bbb Z}=\ul H_n$ (say) where 
$\alpha(a_1,\ldots , a_{n+1})=(a_{n+1}, a_1,\ldots , a_n)$.

Recall that $A$ is a finitely generated abelian group. 
Let $B$ be a finitely generated free abelian 
subgroup of $A$ of finite index. Clearly $\alpha$ 
leaves $B^{n+1}$ invariant. 
Therefore $B^{n+1}\rtimes_{\alpha}{\Bbb Z}=\ul G_n$ (say)  is a 
finite index subgroup 
of $\ul H_n$. Hence $p^{-1}(\ul G_n)$ is a 
finite index subgroup of $H_n$. Obviously 
$p^{-1}(\ul G_n)=B^{n+1}*_{B^n}=G_n$ (say). Where the HNN 
extension $G_n$ is obtained by the same maps $i_j$ as we 
defined above.

We now use $(2)$ of Lemma \ref{inverse} to reduce
the situation to $G_n$. That is we need to prove the 
$\ficwv$ for $G_n$. We would like to apply $(3)$ 
of Lemma \ref{inverse} to $p:G_n\to \ul G_n$. 
From now on we follow the proof of Lemma 4.3 in page 314 
in \cite{FL}. 

Let $C$ be a virtually cyclic 
subgroup of $\ul G_n$. Since $\ul G_n$ is torsion free 
$C$ is either trivial or infinite cyclic. 

Since $G_n$ is 
an $HNN$-extension it acts on a tree with vertex stabilizer 
conjugates of $B^{n+1}$ and edge stabilizers conjugates of 
$B^n$. Therefore ker$(p)$ also acts on this tree and it follows 
that the stabilizers of this restricted action are trivial. 
Hence ker$(p)$ is a free group by [\cite{R1}, Lemma 3.2].

When $C$ is infinite cyclic then in the proof 
of [\cite{FL}, Lemma 4.3] (see paragraphs 
2 and 3 in page 316 in \cite{FL})  
it was deduced that $p^{-1}(C)$ is a direct limit of 
finitely generated subgroups 
$C_i$ (say) so that each $C_i$ is a subgroup of a finite 
free product $K*\cdots *K$ where $K$ is isomorphic to a 
direct product of a finitely generated free group and 
an infinite cyclic group. 

Now since $_{wt}{\cal T} _{\cal {VC}}$ 
is satisfied 
the $\ficwv$ is true for free groups. 
See Lemma \ref{abelian1}. Therefore the $\ficwv$ 
is true for ker$(p)$. Also by Lemma \ref{claim} the  
$\ficwv$ is true for the free product of two groups if the 
$\ficwv$ is true for each free summand and  
$_{wt}{\cal T} _{\cal {VC}}$ and 
${\cal P} _{\cal {VC}}$ are satisfied. Therefore, in 
addition, 
using $(1)$ of Lemma \ref{inverse} we deduce that the $\ficwv$ 
is true for $K*\cdots *K$ and hence for $C_i$ also 
by Lemma \ref{heredi}. Finally 
by Lemma \ref{limit} we conclude that the 
$\ficwv$ is true for $p^{-1}(C)$.

Therefore $G_n$ satisfies the $\ficwv$ for each $n$.

This completes the proof of $(1)$.

{\bf Proof of 2.} 
Let $G$ be a free metabelian group. Then the Magnus Embedding 
Theorem (\cite{Mg}) says that $G$ can be 
embedded as a subgroup of a group of the form $A\wr B$ where 
$A$ and $B$ are abelian. The proof of $(2)$ now follows from $(1)$ 
using Lemma \ref{heredi}.

{\bf Proof of 3.} The proof follows the steps of the 
proof of [\cite{FL}, Corollary 4.2].

Using Lemma \ref{limit} we assume that 
$G=A\wr {\Bbb Z}$ is finitely generated. This makes $A$ a 
finitely generated ${\Bbb Z}[{\Bbb Z}]$-module via the 
conjugation action of $G$ on $A$. Hence $A$ has finite exponent. 

Let us first assume that we have proved the result when 
this exponent is a prime. To complete the proof we now 
use induction on the exponent, say $\tau$. If $\tau=1$ 
then there is nothing to prove. So assume $\tau=pq\geq 2$ and 
$p$ is a prime. Note that $pA$ is a normal subgroup of $G$ and 
hence we have the following two exact sequences. 
$$1\to pA\to G\to G_1\to 1$$ $$1\to A/pA\to G_1\to {\Bbb Z}\to 1.$$

Note that the exponent of $A/pA$ is $p$ and hence the $\ficwv$ is true 
for $G_1$ by assumption. Next, the exponent of $pA$ is $q< \tau$ 
and hence by the induction hypothesis and applying 
$(3)$ of Lemma \ref{inverse} to the homomorphism 
$G\to G_1$ we are done.

Let us now assume that the exponent of $A$ 
is a prime $p$ and complete the proof. This makes  
$A$ a finitely generated ${\Bbb Z}_p[{\Bbb Z}]$-module. 
Since ${\Bbb Z}_p[{\Bbb Z}]$ is a PID $A$ has a decomposition 
in free part and torsion part as ${\Bbb Z}_p[{\Bbb Z}]$-module. 
Let $A_0$ be the free part. Then $A_0$ is a normal subgroup of 
$G$. Let $C$ be an infinite cyclic subgroup of $G$ which 
goes onto $\Bbb Z$ under the map $G\to {\Bbb Z}$. Then 
$A_0C$ is a finite index subgroup of $G$ and also 
$A_0C\simeq {\Bbb Z}^n_p\wr {\Bbb Z}$ where $n$ is the 
rank of $A_0$ as a free ${\Bbb Z}_p[{\Bbb Z}]$-module.
Hence using $(2)$ of Lemma \ref{inverse} we are done 
once we show that the $\ficwv$ is true for ${\Bbb Z}^n_p\wr {\Bbb Z}$.

Let $B={\Bbb Z}^n_p$ then by Lemma \ref{far-lin} we have the following 
equality. $B\wr {\Bbb Z}\simeq \lim_{k\to \infty}B^{k+1}*_{B^k}$. Next 
note that $B^{k+1}*_{B^k}$ is finitely generated and isomorphic to 
the fundamental group of a graph of 
finite groups and hence contains a free subgroup 
of finite index (see [\cite{R1}, Lemma 3.2]. 
Finally using Lemma \ref{abelian} we complete the 
proof.

{\bf Proof of 4.}
The proof uses the method of the proof of 
[\cite{FL}, Corollary 4.4]. 

For a solvable group $G$ we say that it is $n$-{\it step} solvable 
if $G^{(n)}=(1)$ and $G^{(n-1)}\neq (1)$, where $G^{(i)}$ denotes 
the $i$-th derived subgroup of $G$. 

Let $G$ be an $n$-step solvable group. 
The proof of $(3)$ is by induction on $n$. So assume that if 
the $\ficwv$ is true for all closely crystallographic groups 
then it is true for all $k$-step solvable groups for 
$k\leq n-1$. 

We have an exact sequence $1\to G^{(2)}\to G\to G/G^{(2)}\to 1$. 
By $(3)$ of Lemma \ref{inverse} and by the induction hypothesis 
it is enough to prove the $\ficwv$ for $2$-step solvable groups, since 
for any infinite cyclic subgroup of $G/G^{(2)}$ the inverse image under 
the quotient map $G\to G/G^{(2)}$ is an $(n-1)$-step solvable group.

Therefore we have reduced the proof 
to the following situation. 

$$1\to G^{(1)}\to G\to G/G^{(1)}\to 1.$$

Here $G^{(1)}$ and $G/G^{(1)}$ are both abelian. 

By Lemma \ref{abelian1} 
we can assume that $G/G^{(1)}$ is infinite. Again 
applying $(3)$ of Lemma \ref{inverse} to the map $G\to G/G^{(1)}$ we 
see that it is enough to prove the $\ficwv$ for the 
group $G=A\rtimes {\Bbb Z}$ where $A$ is an abelian group.
 
Let $A_T$ be the subgroup of $A$ consisting of all 
elements of finite order. Then $A_T$ is a 
characteristic subgroup of $A$ and hence we have an 
exact sequence. $$1\to A_T\to G\to G/A_T\to 1.$$ 
Note that $G/A_T\simeq (A/A_T)\rtimes {\Bbb Z}$.

Therefore by $(2)$ and $(3)$ of Lemma \ref{inverse} 
it is enough to 
prove the $\ficwv$ for $G$ for the following two 
individual cases.
 
$Case (a).$ $A$ is torsion abelian. 

$Case (b).$ $A$ is torsion free abelian. 

Proof of $Case (a):$ This case is same as $(2)$.

Proof of $Case (b):$
Note that by Lemma \ref{limit} we may assume that 
$A$ is finitely generated as a 
${\Bbb Q}[{\Bbb Z}]$-module (see the proof of 
[\cite{FL}, Corollary 4.4]). As ${\Bbb Q}[{\Bbb Z}]$ is a 
principal ideal domain, $A\simeq X\oplus Y$ where 
$X$ is the sum
of free and $Y$ is sum of finite ${\Bbb Q}$-dimensional 
${\Bbb Q}[{\Bbb Z}]$-submodule of $A$. Let $m=$ dim $Y$ and 
$n$ is the number of free parts in $X$. Note that $Y$ is a 
normal subgroup of $G$. The proof is now 
by induction first on $n$ and then on $m$. If 
$m=n=0$ then $G$ is infinite cyclic 
so there is nothing to 
prove. So assume $n=0$ and $m>0$. Let $Y_0$ be an 
irreducible ${\Bbb Q}[{\Bbb Z}]$-submodule of $Y$. 
Then we have an exact sequence. $$1\to Y_0\to G\to 
G/Y_0\to 1.$$ By induction and by $(2)$ and $(3)$ of 
Lemma \ref{inverse} it is enough to prove the $\ficwv$ 
for $Y_0\rtimes {\Bbb Z}$, which is true by hypothesis 
since $Y_0\rtimes {\Bbb Z}$ is a closely crystallographic group.

Next assume that $n>0$. Then it follows that $G/Y$ is isomorphic 
to ${\Bbb Q}^n\wr {\Bbb Z}$ for which $(1)$  
shows that the $\ficwv$ is true. Now again we apply 
$(2)$ and $(3)$ of Lemma \ref{inverse} to the homomorphism 
$G\to G/Y$ and hence we need only to show the $\ficwv$ when 
$G/Y$ is infinite cyclic. But this is again the case 
$n=0$ treated above.\end{proof}

\begin{proof}[Proof of Theorem \ref{wreath}]
The proof is immediate from Theorem \ref{general}, Lemmas \ref{pvc}, 
\ref{tvc} and \ref{lvc}.\end{proof}

\begin{proof}[Proof of Theorem \ref{ic}] Let $G$ be a closely 
crystallographic group. Recall that then $G$ is nearly 
crystallographic. Since nearly crystallographic 
groups are linear (see the paragraph after [\cite{FL}, Definition]) 
the following hold by [\cite{FL}, Theorem 1.1] and the discussion 
following it.

$$Wh(G)=\tilde K_0({\Bbb Z}[G])=K_i({\Bbb Z}[G])=0$$ 

for all negative integers $i$.

Let $G=A\rtimes {\Bbb Z}$ where $A$ is torsion free abelian.
Since $A$ is a direct limit of its finitely generated 
subgroups and since the functors in the above display commute 
with direct limit, the display also holds if we replace $G$ by $A$.

As the Whitehead groups of the groups ($G$ and $A$) we are considering 
vanish the surgery $L$-groups of these groups with different 
decorations coincide. Therefore we denote the surgery groups 
by the simple notation $L_n(-)$.

Let us first show that the non-connective assembly map 
in $L$-theory is an isomorphism for $G$. That is $$H_n(K(G, 1), 
{\ul {\Bbb L}}_0)\to L_n({\Bbb Z}[G])$$ is an isomorphism. 
 
Since the isomorphism of the above assembly map is invariant under 
taking direct limit of groups and since the map is an isomorphism 
for finitely generated free abelian groups (\cite{FH}), 
it follows that $H_n(K(A, 1),     
{\ul {\Bbb L}}_0)\to L_n({\Bbb Z}[A])$ is also an isomorphism. 

Let us now recall the following Ranicki's Mayer-Vietoris 
type exact sequence of surgery groups (\cite{Ran}) for $G$. 
$$\cdots\to L_{n+1}(G)\to L_n(A)\to L_n(A)\to L_n(G)\to\cdots.$$

There is a similar exact sequence for the homology theory 
$H_n(-, {\ul {\Bbb L}}_0)$. Now, since the assembly map is natural, a 
five lemma argument imply that $H_n(K(G, 1),     
{\ul {\Bbb L}}_0)\to L_n({\Bbb Z}[G])$ is an isomorphism. 

Next, the above $K$-theoretic vanishing result and an application 
of the Rothenberg's exact sequence (see the proof of Corollary \ref{borel}) 
implies that the 
IC$^{L^i}(G)$ is satisfied 
for $i=\l -\infty\r, h$ or $s$.\end{proof}

\begin{proof}[Proof of Theorem \ref{residually1}]
{\bf Proof of A.} $(A)$ is an immediate consequence of [\cite{R1}, 
$(1)$ of Proposition 2.2] and Lemmas \ref{pvc}, \ref{tvc} and 
\ref{lvc}. Recall that in [\cite{R1},
$(1)$ of Proposition 2.2] we assumed that the equivariant homology theory 
should be continuous when the graph of groups is infinite. But there 
this continuity assumption was used to get Lemma \ref{lvc} for the 
corresponding homology theory. Since we have noted that 
for $L$-theory Lemma \ref{lvc} is true we do not need this assumption 
here. 

{\bf Proof of B(i).} $B(i)$  
follows using Lemma \ref{pvc}, \ref{tvc} and \ref{lvc} and the following 
Proposition \ref{next}.

{\bf Proof of B(ii).} At first recall that virtually polycyclic groups 
are residually finite. And the $\flvu$ is true for virtually polycyclic 
groups by [\cite{R3}, Theorem 1.1 and (iv) of Theorem 1.3]. Also note that 
by the same result IA$(K)$ is true in the $L$-theory case for any 
virtually polycyclic group $K$. This completes the proof using $B(i)$.\end{proof}

\begin{prop} \label{next} Assume the same hypotheses as in $B(i)$ of 
Theorem \ref{residually1} replacing 
$L$ by an arbitrary equivariant homology theory and in addition assume that
$_{wt}{\cal T} _{\cal {VC}}$,
${\cal P} _{\cal {VC}}$ and
${\cal L} _{\cal {VC}}$ are satisfied. Then the $\ficwv$ 
is true for $\pi_1({\cal G})$.\end{prop}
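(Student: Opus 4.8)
The plan is to carry out the usual reduction for the Fibered Isomorphism Conjecture along $f$, pass to the preimages of infinite cyclic subgroups of $Q$, realise each such preimage as the fundamental group of a graph of groups with finite edge groups, and finish with the argument proving Theorem \ref{residually1}(A), which under the present hypotheses works for an arbitrary equivariant homology theory. First I would apply $(3)$ of Lemma \ref{inverse} to $f:\pi_1({\cal G})\to Q$: since ${\cal C}={\cal {VC}}$, the $\ficwv$ holds for $Q$ by hypothesis, and ${\cal P} _{\cal {VC}}$ is assumed, so by that lemma, using also its part $(2)$, it suffices to prove the $\ficwv$ for $f^{-1}(C)$ with $C\leq Q$ infinite cyclic. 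The subgroup $f^{-1}(C)$ acts on the tree $T$ on which $\pi_1({\cal G})$ acts, with edge stabilizers lying in conjugates of the finite edge groups of $\cal G$; hence, by Bass-Serre theory, $f^{-1}(C)=\pi_1({\cal G}')$ for a graph of groups ${\cal G}'$ with finite edge groups (possibly infinite as a graph) whose vertex groups are, up to conjugacy in $\pi_1({\cal G})$, of the form $W=V\cap f^{-1}(C')$ with $V$ a vertex group of $\cal G$ and $C'$ a conjugate of $C$ in $Q$ (so $C'$ is trivial or infinite cyclic).

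Next I would analyse such a vertex group $W=V\cap f^{-1}(C')$. Writing $K_V=\ker(f|_V)$, there is an exact sequence $1\to K_V\to W\to f(V)\cap C'\to 1$ in which $f(V)\cap C'$ is trivial or infinite cyclic (and $K_V$ is also the kernel of the restriction of $f$ to $W$). By hypothesis $K_V$ is finitely generated, residually finite, and satisfies the $\ficwv$. If $f(V)\cap C'$ is infinite cyclic, then the IA$(V)$ hypothesis --- i.e.\ that IA holds for the normal subgroup $K_V$ with infinite cyclic quotient --- upgrades this to the $\ficwv$ for $W$; if $f(V)\cap C'$ is trivial, then $W=K_V$. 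So every vertex group of ${\cal G}'$ satisfies the $\ficwv$. It is also finitely generated, since $K_V$ is and $W/K_V$ is cyclic, and it is residually finite: an element of $W$ not in $K_V$ survives in a finite quotient of $W/K_V\cong{\Bbb Z}$, while a nontrivial element of $K_V$ lies outside some characteristic finite-index subgroup $N$ of $K_V$ (which exists because $K_V$ is finitely generated and residually finite), and then $N$ is normal in $W$ with $W/N$ finite-by-${\Bbb Z}$, hence residually finite.

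Finally, ${\cal G}'$ is a graph of groups with finite edge groups whose vertex groups are finitely generated, residually finite, and satisfy the $\ficwv$. The proof of Theorem \ref{residually1}(A) uses only [\cite{R1}, $(1)$ of Proposition 2.2] together with $_{wt}{\cal T} _{\cal {VC}}$, ${\cal P} _{\cal {VC}}$ and ${\cal L} _{\cal {VC}}$ (the last also covering the infinite-graph case), all of which are assumed here, so that argument applies to the given equivariant homology theory and yields the $\ficwv$ for $\pi_1({\cal G}')=f^{-1}(C)$. Combined with the first step through $(3)$ of Lemma \ref{inverse}, this gives the $\ficwv$ for $\pi_1({\cal G})$. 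The step needing the most care is the middle one: reading off the vertex groups of ${\cal G}'$ from the $f^{-1}(C)$-action on $T$ and verifying that they are finitely generated and residually finite, so that Theorem \ref{residually1}(A) can be applied; the sole role of IA$(V)$ is to transport the $\ficwv$ from $K_V$ to $W$ across the trivial-or-infinite-cyclic quotient $W/K_V$.
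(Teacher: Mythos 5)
Your proposal is correct and follows essentially the same route as the paper: reduce via $(3)$ of Lemma \ref{inverse} to $f^{-1}(C)$ for $C$ infinite cyclic, identify it as a graph of groups with finite edge groups whose vertex groups are (subgroups of) extensions $K_V\rtimes{\Bbb Z}$, invoke IA for the kernel to get the $\ficwv$ for these vertex groups, check their residual finiteness by passing to a finite-index characteristic subgroup of $K_V$ so that the quotient is virtually cyclic, and conclude with [\cite{R1}, $(1)$ of Proposition 2.2]. The only difference is presentational: you verify residual finiteness of the vertex groups directly where the paper cites [\cite{R1}, Lemma 4.2], but the underlying argument is identical.
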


\begin{proof} We need to apply $(3)$ of Lemma \ref{inverse} 
to the homomorphism $f:\pi_1({\cal G})\to Q$. 
Let $C$ be an infinite cyclic subgroup of $Q$. Then $f^{-1}(C)$ 
is isomorphic to the fundamental group of a graph of groups 
whose edge groups are finite and vertex groups are subgroups of 
groups of the form $K\rtimes {\Bbb Z}$ where $K$ is finitely 
generated and residually 
finite and by IA$(K)$ $K\rtimes {\Bbb Z}$ satisfies the $\ficwv$.
The proof will be completed by [\cite{R1}, $(1)$ of Proposition 2.2] 
once we show that $K\rtimes {\Bbb Z}$ is residually finite. We apply 
[\cite{R1}, Lemma 4.2]. That is, we have to show that 
given any finite index subgroup $K'$ of $K$ there is a (finite index)  
subgroup $K''$ of $K'$ which is normal in $K\rtimes {\Bbb Z}$ and the 
quotient $(K\rtimes {\Bbb Z})/K''$ is residually finite. Since 
$K$ is finitely generated we can find a finite index characteristic 
subgroup $K''$ (and hence normal in $K\rtimes {\Bbb Z}$) 
of $K$ contained in $K'$. Then 
$(K\rtimes {\Bbb Z})/K''$ is residually finite since it is 
virtually cyclic by [\cite{R1}, Lemma 6.1].

This completes the proof of the Proposition.\end{proof}

\begin{proof}[Proof of Theorem \ref{graph-group}] Proofs 
of $(A)$ and $(B)$ follow using the following.

$\bullet$ Finitely generated nilpotent groups are virtually polycyclic.

$\bullet$ Lemmas \ref{pvc}, \ref{tvc} and \ref{lvc}.

$\bullet$ [\cite{R1}, $(3)$ of Proposition 2.2], which says that 
the statements $(A)$ and $(B)$ are true for general equivariant 
homology theories ${\cal H}^?_*$ if ${\cal H}^?_*$ is continuous and 
${\cal P} _{\cal {VC}}$ and 
$_{wt}{\cal T} _{\cal {VC}}$ are satisfied. In the 
proof of [\cite{R1}, $(3)$ of Proposition 2.2] we needed the fact that 
${\cal L} _{\cal {VC}}$ is satisfied which is implied 
by the hypothesis that ${\cal H}^?_*$ is continuous 
(see [\cite{R1}, Proposition 5.1]). This completes the argument using the 
previous item.    

The proof of $(C)$ follows from the following. At first 
assume that the graph of groups is finite which we can 
by Lemma \ref{lvc}.

$\bullet$ $\pi_1({\cal G})\simeq \pi_1({\cal H})$ where 
$\cal H$ is a graph of groups whose edge groups are finite and 
each vertex group is either virtually cyclic or fundamental group  
of a tree of infinite virtually cyclic abelian groups. 
See [\cite{R1}, Lemma 3.1].

$\bullet$ The vertex groups of $\cal H$ are residually finite. See 
[\cite{R1}, Lemma 4.4].

$\bullet$ The vertex groups satisfy $\flvu$. Use $(1)$ and 
[\cite{R1}, Lemma 3.5] which implies that the graph of groups 
$\cal G$ has the intersection property.  

For the proof of $(D)$ we need the following.

$\bullet$ Lemmas \ref{pvc}, \ref{tvc} and \ref{lvc}.

$\bullet$ [\cite{R1}, $2(i)$ of Proposition 2.3]. Here note that 
for the proof of [\cite{R1}, $2(i)$ of Proposition 2.3] we 
needed that the $\ficwv$ is true for 
${\Bbb Z}^n\rtimes {\Bbb Z}$ for all $n$, which 
is the case for the $\flvu$ by [\cite{R3}, Theorem 1.1 and $(iv)$ of 
Theorem 1.3].

This completes the proof.\end{proof}

\begin{proof}[Proof of Theorem \ref{reduction1}] Let $\cal G$ be a 
graph of groups. If $\cal G$ is a tree then there is nothing to prove. 
So assume that it is not a tree. Then there is a surjective homomorphism 
$f:\pi_1({\cal G})\to F$ where $F$ is a countable free group. And the 
kernel of $f$ is a tree of groups (the universal covering graph of groups 
of $\cal G$). Now using the hypothesis, Lemma \ref{abelian1} and 
$(2)$ and $(3)$ of Lemma \ref{inverse} we complete the proof of $(A)$. 

For the proof of $(B)$ we just need to note that the universal covering 
graph of groups of $\cal G$ is a tree of groups whose class of 
vertex and edge groups is same as that of $\cal G$.\end{proof}

\begin{proof}[Proof of Theorem \ref{abelian}]
By $(B)$ of Theorem \ref{reduction1} we can assume that the graph of 
groups is a tree of finitely generated abelian groups. Next, 
by [\cite{R1}, Lemma 3.3] there is a surjective homomorphism 
$p:\pi_1({\cal G})\to H_1(\pi_1({\cal G}), {\Bbb Z})$ so that the 
restriction of $p$ to any vertex group has trivial kernel. This 
implies that the kernel of $p$ acts on a tree with trivial 
stabilizers and hence it is a free group. Now using $(2)$ and $(3)$ of 
Lemma \ref{inverse} and Lemma \ref{abelian1} we complete the 
proof.\end{proof}

\begin{proof}[Proof of Theorem \ref{residually2}] The proof 
of $(A)$ follows from Lemmas \ref{pvc}, \ref{tvc}, \ref{lvc} and 
[\cite{R1}, $(1)$ of Proposition 2.2]. The proof of $(B)$ is routine 
using  $(A)$ and $(2)$ and $(3)$ of Lemma \ref{inverse}. The only 
fact we need to mention is that a virtually residually finite group 
is residually finite.\end{proof}

\section{Some special cases}
In this section we deduce some results for the following simple 
cases of graphs of groups. This is contrary to the situation 
of ascending HNN extension for which the Fibered 
Isomorphism Conjecture is still not proved. The simplest 
case is the groups ${\Bbb Z}*_{\Bbb Z}$ where the 
two inclusions ${\Bbb Z}\to {\Bbb Z}$ are identity and 
multiplication by $2$. Note here that ${\Bbb Z}*_{\Bbb Z}=
{\Bbb Z}[\frac{1}{2}]\rtimes {\Bbb Z}$. See Remark \ref{miller}.

\begin{prop} \label{hnn} Let $G$ and $A$ be two 
groups. Let $i_j:A\to G$ 
be two injective homomorphism for $j=1,2$. Assume that there 
exist an automorphism $\alpha:G\to G$ with the property that 
$\alpha (i_1(a))=i_2(a)$ for all $a\in A$. Then the $\flf$ 
is satisfied for the $HNN$-extension $G*_A$ (defined by the 
two homomorphism $i_1$ and $i_2$) provided $G$ also 
satisfies the $\flf$.\end{prop}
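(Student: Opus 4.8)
The plan is to realize $G *_A$ as a semidirect product with $\mathbb{Z}$ and then apply $(3)$ of Lemma \ref{inverse}. The hypothesis that there is an automorphism $\alpha$ of $G$ carrying $i_1(a)$ to $i_2(a)$ is exactly what is needed to see that the HNN extension $G *_A$, which is generated by $G$ together with a stable letter $t$ conjugating $i_1(A)$ to $i_2(A)$, contains $G$ as a normal subgroup. Indeed, since $\alpha$ already conjugates $i_1(A)$ onto $i_2(A)$ inside $G$, the stable letter can be ``corrected'': the subgroup of $G *_A$ generated by $G$ is normal with quotient infinite cyclic. More precisely, I would first observe $G *_A \simeq G \rtimes_\alpha \mathbb{Z}$. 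To check this, map $G \rtimes_\alpha \mathbb{Z} \to G *_A$ by sending $G$ identically and the generator of $\mathbb{Z}$ to $t$; conversely, the universal property of the HNN extension gives a map $G *_A \to G \rtimes_\alpha \mathbb{Z}$ sending $g \mapsto g$ and $t \mapsto$ (generator of $\mathbb{Z}$), since in $G \rtimes_\alpha \mathbb{Z}$ the element $t$ satisfies $t\, i_1(a)\, t^{-1} = \alpha(i_1(a)) = i_2(a)$, which is the defining relation. These two maps are mutually inverse.

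Once we have $G *_A \simeq G \rtimes_\alpha \mathbb{Z}$, let $p : G \rtimes_\alpha \mathbb{Z} \to \mathbb{Z}$ be the projection. By $(3)$ of Lemma \ref{inverse} (applied with ${\cal C} = {\cal {VC}}$, and noting that $\pfu$ holds by Lemma \ref{pvc}, so it suffices to treat infinite cyclic $H \in {\cal {FIN}}(\mathbb{Z})$... actually $H$ infinite cyclic, by the final sentence of $(3)$ of Lemma \ref{inverse}), it is enough to prove the $\flf$ for $\mathbb{Z}$ itself and for $p^{-1}(C)$ for every infinite cyclic subgroup $C < \mathbb{Z}$. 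The group $\mathbb{Z}$ is virtually abelian, so the $\flf$ holds for it by Lemma \ref{abelian1} (using Lemmas \ref{pvc}, \ref{tvc}, \ref{lvc}). For an infinite cyclic $C = n\mathbb{Z} < \mathbb{Z}$, the preimage $p^{-1}(C)$ is the subgroup $G \rtimes_{\alpha^n} \mathbb{Z}$, and this is again an HNN extension $G *_A$, now defined by the pair of inclusions $i_1$ and $i_2 \circ \alpha^{-(n-1)}$ or equivalently presented via the automorphism $\alpha^n$. But $\alpha^n$ is again an automorphism of $G$, so $p^{-1}(C)$ has exactly the same structural shape; and more to the point, $p^{-1}(C)$ is a finite-index subgroup... no, it has infinite index but is again $G \rtimes \mathbb{Z}$ with $G$ normal.

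The cleaner route for the preimage step is this: $p^{-1}(C) \simeq G \rtimes_{\alpha^n} \mathbb{Z}$ still contains $G$ as a normal subgroup with infinite cyclic quotient, and the $\flf$ is assumed true for $G$. So I would want to reduce this to the hypothesis directly. One way: observe that $G \rtimes_{\alpha^n} \mathbb{Z}$ is itself an HNN extension of $G$ over $A$ satisfying the hypotheses of the Proposition (with $\alpha$ replaced by $\alpha^n$), so the statement to be proved has a built-in self-similarity — which is precisely why $(3)$ of Lemma \ref{inverse} closes the loop. Formally, I would phrase the proof as: it suffices, by Lemma \ref{limit} and Lemma \ref{abelian1}, to handle the preimages $p^{-1}(C)$; each such preimage is again of the form covered by the Proposition's hypothesis with $G$ the same group and $\alpha$ replaced by a power; since the $\flf$ for $G$ is given, and since the reduction via $(3)$ of Lemma \ref{inverse} only ever asks about $G$ and about virtually cyclic groups, the conclusion follows. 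The main obstacle is making the identification $p^{-1}(n\mathbb{Z}) \simeq G \rtimes_{\alpha^n}\mathbb{Z}$ and confirming it again falls under the hypothesis (so that we are not in an infinite regress but rather have genuinely reduced to the $\flf$ for $G$ together with the $\flf$ for virtually cyclic groups from Lemma \ref{abelian1}); once that bookkeeping is done, everything else is a direct appeal to Lemmas \ref{pvc}, \ref{tvc}, \ref{lvc}, \ref{abelian1} and \ref{inverse}.
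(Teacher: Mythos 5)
Your argument breaks down at the very first step: the claimed isomorphism $G*_A\simeq G\rtimes_\alpha\mathbb{Z}$ is false in general. The map you propose from $G\rtimes_\alpha\mathbb{Z}$ to $G*_A$ is not a well-defined homomorphism: in $G\rtimes_\alpha\mathbb{Z}$ the generator $t$ of $\mathbb{Z}$ conjugates \emph{every} element of $G$ according to $\alpha$, whereas in $G*_A$ the stable letter is only required to conjugate $i_1(A)$ onto $i_2(A)$; by Britton's lemma $t\,g\,t^{-1}\neq\alpha(g)$ in $G*_A$ whenever $g\notin i_1(A)$. A concrete counterexample is $A=\{1\}$ and $\alpha=\mathrm{id}$, where $G*_A=G*\mathbb{Z}$ while $G\rtimes_\alpha\mathbb{Z}=G\times\mathbb{Z}$. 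What the hypothesis actually gives is only a surjection $f:G*_A\to G\rtimes_\alpha\langle t\rangle$ (defined by $g\mapsto g$, $t\mapsto t$, which \emph{is} compatible with the HNN relation since $\alpha(i_1(a))=i_2(a)$), and this surjection has a large kernel --- a free group, since it acts on the Bass--Serre tree of $G*_A$ with trivial stabilizers. Dealing with that kernel is the actual content of the proposition, and it is exactly what your proposal skips. The paper's proof applies $(3)$ of Lemma \ref{inverse} to this surjection $f$ (not to a projection onto $\mathbb{Z}$): since $f$ is injective on each vertex group $gGg^{-1}$ of the tree, the preimage of a finite subgroup of $G\rtimes_\alpha\langle t\rangle$ acts on the tree with finite stabilizers, and one concludes using the graph-of-groups machinery (Theorem \ref{residually2}) together with the fact that the $\flf$ holds for $G\rtimes_\alpha\langle t\rangle$ because its subgroup $G$, the preimage of the trivial subgroup of $\langle t\rangle$, satisfies it by hypothesis.

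A secondary confusion: the statement concerns $\flf$, the conjecture relative to the family $\mathcal{FIN}$ of finite subgroups, so in $(3)$ of Lemma \ref{inverse} one only needs the preimages of \emph{finite} subgroups of the quotient (for the quotient $\mathbb{Z}$ this is just the preimage of the trivial group). The reduction to infinite cyclic subgroups in the last sentence of $(3)$ is stated only for $\mathcal{C}=\mathcal{VC}$ and is irrelevant here. Your discussion of $p^{-1}(n\mathbb{Z})\simeq G\rtimes_{\alpha^n}\mathbb{Z}$ is, as you half-acknowledge, circular --- it reproduces a group of exactly the form whose conjecture you are trying to prove, and nothing ``closes the loop'' --- but for the $\mathcal{FIN}$ family that entire branch is unnecessary. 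The fatal issue remains the first one: without the (false) isomorphism, your reduction never engages the HNN extension itself.
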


\begin{prop} \label{generalized} 
Let $G_1$ and $G_2$ be two groups. Let $A$ be a 
group with two injective homomorphism $i_j:A\to G_j$ for 
$j=1,2$. Assume that there is an isomorphism $\alpha:G_1\to G_2$ 
with the property that $\alpha (i_1(a))=i_2(a)$ for 
each $a\in A$. Then the $\flf$ 
is satisfied for the generalized free product 
$G_1*_AG_2$ (defined by the 
two homomorphism $i_1$ and $i_2$) provided $G_1$ (or $G_2$) 
also satisfies the $\flf$.\end{prop}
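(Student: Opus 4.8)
The strategy is to realize the generalized free product $G_1 *_A G_2$ as a subgroup of an HNN-extension to which Proposition \ref{hnn} applies, or more directly to mimic the argument of that proposition. Since there is an isomorphism $\alpha: G_1 \to G_2$ intertwining $i_1$ and $i_2$, the group $G_1 *_A G_2$ maps onto the infinite cyclic group $\Bbb Z$ by sending $G_1$ to the generator's worth of ``$0$'' side and using $\alpha$ to flip between the two factors; more precisely, $G_1*_AG_2$ is isomorphic to the fundamental group of a graph of groups with one edge and two vertices, and the isomorphism $\alpha$ identifies it with a semidirect-product-like structure. The cleanest route is: first I would exhibit a surjection $f: G_1 *_A G_2 \to \Bbb Z/2\Bbb Z$ (sending $G_1 \to 0$, $G_2 \to 1$) whose kernel $K$ is, by Bass--Serre theory, an HNN-extension of $G_1$ over $A$ with stable letter squaring back into $G_1$ — this is exactly the double $G_1 *_A \overline{G_1}$ glued via $i_1$ and $\alpha^{-1} i_2$, i.e. the HNN-extension with the two inclusions $i_1$ and $\alpha^{-1}\circ i_2$ of $A$ into $G_1$.

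\textbf{Key steps.} (1) Identify $K = \ker f$ as an HNN-extension $G_1 *_A$ defined by the two embeddings $i_1, \alpha^{-1}\circ i_2 : A \to G_1$. Here the hypothesized automorphism of $G_1$ needed to apply Proposition \ref{hnn} is simply the identity composed appropriately — one checks that $\mathrm{id}_{G_1}$ carries $i_1(a)$ to $i_1(a)$, so instead one wants an automorphism $\beta$ of $G_1$ with $\beta(i_1(a)) = \alpha^{-1}(i_2(a))$; if such $\beta$ exists Proposition \ref{hnn} applies directly to $K$ using that $\flf(G_1)$ holds. (2) If no such $\beta$ is available, argue instead directly: $K$ acts on the Bass--Serre tree of the HNN-extension with vertex stabilizers conjugates of $G_1$ and edge stabilizers conjugates of $A$, and one applies $(3)$ of Lemma \ref{inverse} to the further map $K \to \Bbb Z$ (the HNN height), reducing to preimages of infinite cyclic subgroups — these are again graphs of groups with vertex groups conjugates of $G_1$ and edge groups conjugates of $A$. (3) Finally lift from $K$ to $G_1 *_A G_2$ using $(2)$ of Lemma \ref{inverse}, since $[G_1 *_A G_2 : K] = 2$.

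\textbf{Main obstacle.} The crux is step (1)/(2): showing that $\flf$ propagates from $G_1$ through the HNN-extension structure. This is precisely the content of Proposition \ref{hnn}, so the real work is verifying that the kernel $K$ of the index-two map genuinely has the HNN form to which Proposition \ref{hnn} applies, with the requisite compatible automorphism. The automorphism hypothesis of Proposition \ref{hnn} is the delicate point: one needs an automorphism of $G_1$ conjugating the first inclusion to the second. When $\alpha$ itself restricts suitably, this is automatic, but in general I expect one must instead bypass Proposition \ref{hnn} and argue as in its proof — namely, the automorphism $\alpha$ of the whole configuration shows that $G_1 *_A G_2$ is an \emph{ascending}-type union or, better, that the double mapping cylinder collapses: the map $G_1 *_A G_2 \to \Bbb Z$ sending the two factors to $0$ and $1$ respectively (well-defined because $A \to \Bbb Z$ is trivial on both sides) has kernel an increasing union of conjugates of $G_1$, hence a direct limit of copies of $G_1$, so by Lemma \ref{limit} (with $\FL$, using Lemma \ref{lvc}) $\flf$ holds for the kernel, and then $(3)$ of Lemma \ref{inverse} applied to $G_1 *_A G_2 \to \Bbb Z$ together with $\flf(Q)$ for $Q$ infinite cyclic (which holds since $\Bbb Z \in \cal{VC}$) finishes the argument. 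The plan, then, is to carry out this last clean version: reduce via the height map to the kernel, recognize the kernel as a direct limit of conjugates of $G_1$ using the intertwining isomorphism $\alpha$, and conclude with Lemmas \ref{limit} and \ref{inverse}.
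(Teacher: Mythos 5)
Your argument breaks down at its very first step: the map $f\colon G_1 *_A G_2 \to {\Bbb Z}/2{\Bbb Z}$ ``sending $G_1$ to $0$ and $G_2$ to $1$'' is not a group homomorphism (the identity element of $G_2$ would have to map to $0$), and the same objection kills the later ``height map'' $G_1*_AG_2 \to {\Bbb Z}$ sending the two factors to $0$ and $1$. An amalgamated free product, unlike an HNN extension, carries no canonical surjection onto ${\Bbb Z}$ or ${\Bbb Z}/2$: a homomorphism out of $G_1*_AG_2$ is exactly a pair of homomorphisms out of $G_1$ and $G_2$ agreeing on $A$, and nothing in the hypotheses produces an index-two subgroup $K$ of the form you describe. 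Consequently steps (1)--(3) and the ``clean version'' at the end all rest on an object that does not exist. Moreover, the picture of the kernel as an increasing union of conjugates of $G_1$ is the one valid for a strictly ascending HNN extension --- precisely the case (e.g.\ ${\Bbb Z}*_{\Bbb Z}={\Bbb Z}[\frac{1}{2}]\rtimes{\Bbb Z}$) that the paper explicitly flags as still open --- so even if a height map existed, that identification would not be available.

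The missing idea is the standard embedding of the amalgam into an HNN extension over the free product. Set $G = G_1 * G_2$ and let $j_1, j_2\colon A\to G$ be the two inclusions induced by $i_1, i_2$. The maps $\alpha$ and $\alpha^{-1}$ assemble into an automorphism $\tilde\alpha$ of $G$ interchanging the free factors, and it satisfies $\tilde\alpha(j_1(a))=j_2(a)$. There is an embedding $G_1*_AG_2 \hookrightarrow G*_A$, where $G*_A$ is the HNN extension defined by $j_1$ and $j_2$ (send $g_1\mapsto g_1$ and $g_2\mapsto t^{-1}g_2t$; this is the classical trick from combinatorial group theory). The $\flf$ holds for $G$ by Lemmas \ref{pvc}, \ref{tvc} and \ref{claim}, since $G_2\cong G_1$ and $\flf(G_1)$ is assumed; Proposition \ref{hnn} then applies to $G*_A$ with the automorphism $\tilde\alpha$, and the hereditary property (Lemma \ref{heredi}) pulls the conclusion back to $G_1*_AG_2$. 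Your instinct to route the argument through Proposition \ref{hnn} was correct, but it must be applied to the HNN extension of the free product $G_1*G_2$, not to a nonexistent finite-index subgroup of the amalgam itself.
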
 

The following is an immediate corollary of 
Proposition \ref{generalized}.

\begin{cor} Let $M$ and $P$ be two compact manifold with nonempty 
connected $\pi_1$-injective boundary and let $f:M\to P$ be a homotopy 
equivalence so that $f|_{\p M}:{\p M}\to {\p P}$ is a 
homeomorphism. Then 
the $\flf$ is true for $\pi_1(M\cup_{\p} P)$ if the 
$\flf$ is true for $\pi_1(M)$. Here $M\cup_{\p}P$ is the union 
of $M$ and $P$ glued along the boundary via the map $f$.\end{cor}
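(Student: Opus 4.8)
The plan is to realize $\pi_1(M\cup_\partial P)$ as a generalized free product to which Proposition \ref{generalized} applies directly. By van Kampen's theorem, since $\partial M$ is connected and $\pi_1$-injective in both $M$ and $P$, we have
$$\pi_1(M\cup_\partial P)\simeq \pi_1(M)*_{\pi_1(\partial M)}\pi_1(P),$$
where the amalgamating inclusions are $(i_1)_*:\pi_1(\partial M)\to\pi_1(M)$ and $(i_2)_*:\pi_1(\partial P)\to\pi_1(P)$, both injective by the $\pi_1$-injectivity hypothesis. Here I am using that $\partial M$ is nonempty and connected so that the pushout has no extra free factors and the amalgamated subgroup is a single copy of $\pi_1(\partial M)$.

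Next I would produce the isomorphism required by Proposition \ref{generalized}. Set $G_1=\pi_1(M)$, $G_2=\pi_1(P)$, and $A=\pi_1(\partial M)$. Since $f:M\to P$ is a homotopy equivalence, $f_*:G_1\to G_2$ is an isomorphism; take $\alpha=f_*$. Because $f|_{\partial M}:\partial M\to\partial P$ is a homeomorphism (in particular a homotopy equivalence) and the diagram of inclusions commutes up to homotopy with $f$ and $f|_{\partial M}$, we get $\alpha\circ (i_1)_* = (i_2)_*\circ (f|_{\partial M})_*$, and after identifying $\pi_1(\partial P)$ with $A=\pi_1(\partial M)$ via the homeomorphism $f|_{\partial M}$ this reads $\alpha(i_1(a))=i_2(a)$ for all $a\in A$, exactly the hypothesis of Proposition \ref{generalized}.

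With these identifications in place, Proposition \ref{generalized} applies: the $\flf$ is satisfied for $G_1*_A G_2\simeq\pi_1(M\cup_\partial P)$ provided it is satisfied for $G_1=\pi_1(M)$, which is the hypothesis. This completes the proof. The only point requiring a little care — and the step I expect to be the main obstacle — is the bookkeeping of basepoints and the identification of $\pi_1(\partial M)$ with $\pi_1(\partial P)$ under $f|_{\partial M}$, so that the two amalgamating homomorphisms are genuinely over the same group $A$ and the commutation $\alpha\circ i_1=i_2$ holds on the nose rather than merely up to conjugation; once the basepoint is chosen on $\partial M$ and transported by $f|_{\partial M}$, everything is strictly compatible and the application of Proposition \ref{generalized} is immediate.
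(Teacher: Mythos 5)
Your proposal is correct and is exactly the argument the paper intends: the paper states this result as an immediate consequence of Proposition \ref{generalized}, obtained by applying van Kampen to write $\pi_1(M\cup_\partial P)$ as $\pi_1(M)*_{\pi_1(\partial M)}\pi_1(P)$ and taking $\alpha=f_*$, with the strict commutativity $f_*\circ(i_1)_*=(i_2)_*\circ(f|_{\partial M})_*$ coming from the fact that $f|_{\partial M}$ is literally the restriction of $f$. Your attention to the basepoint/identification issue is the right point of care, and your resolution of it is correct.
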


\begin{proof}[Proof of Proposition \ref{hnn}] At first note that there 
is an obvious surjective homomorphism $f:G*_A\to G\rtimes_{\alpha}\l t\r$. 
Using $(2)$ of Lemma \ref{inverse} it follows that the $\flf$ 
is true for $G\rtimes \l t\r$ for any action of $\l t\r$ over 
$G$. Now note that the group $G*_A$ acts on a tree with vertex 
groups conjugates of $G$ and edge groups conjugates of $A$. And 
also the restrictions of $f$ to the vertex groups are injective. 
Therefore $(B)$ of Theorem \ref{residually2} completes the proof.\end{proof}

\begin{proof}[Proof of Proposition \ref{generalized}] 
Let us consider the free product $G=G_1*G_2$. Then 
there are two inclusions $j_1$ and $j_2$ from $A$ to $G$ defined by 
$i_1$ and $i_2$. And there is an isomorphism 
$\tilde \alpha :G\to G$ defined by $\alpha$ so that 
$\tilde \alpha(j_1(a))=j_2(a)$. Next note that there is an 
embedding $G_1*_AG_2\to G*_A$ where $G_1*_AG_2$ is defined 
with respect to $i_1$ and $i_2$ and $G*_A$ is defined 
with respect to $j_1$ and $j_2$. Hence 
by Lemma \ref{heredi} it is enough to prove the $\flf$ for 
$G*_A$. Since by Lemmas \ref{pvc}, \ref{tvc} and \ref{claim} 
the $\flf$ is true for $G$ we are done using Proposition \ref{hnn}.\end{proof}

\begin{rem}{\rm The Propositions \ref{hnn} and \ref{generalized} 
can be proven for arbitrary homology theories and with respect to 
the class $\cal {FIN}$ of finite groups if we add the extra assumptions that 
$_{wt}{\cal T} _{\cal {FIN}}$ and 
${\cal L} _{\cal {FIN}}$ are satisfied. We have 
already mentioned in the introduction that $_{wt}{\cal T} _{\cal {FIN}}$ 
in the $K$-theory case is still not known.}\end{rem} 
 
\section{Some consequences}
The following are some of the well-known consequences of the Isomorphism 
Conjecture.

\begin{cor} \label{whitehead} If $\G$ is a torsion free group for 
which the Fibered Isomorphism 
Conjecture in pseudoisotopy theory is true, then the following holds. 

The Whitehead group $Wh(\G)$, the lower $K$-groups 
$K_{-i}({\Bbb Z}\G)$ for $i\geq 1$ and 
the reduced projective class group $\tilde K_0({\Bbb Z}\G)$  
 vanish.\end{cor}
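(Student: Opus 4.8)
The plan is to reduce the statement $Wh(\Gamma)=\tilde K_0(\mathbb Z\Gamma)=K_{-i}(\mathbb Z\Gamma)=0$ for torsion free $\Gamma$ (satisfying the Fibered Isomorphism Conjecture in pseudoisotopy theory) to the computation of the source of the relevant assembly map. Recall that the pseudoisotopy version of the conjecture, applied to the pair $(\Gamma,\mathcal{VC}(\Gamma))$, asserts that the assembly map
$$H_n^\Gamma(E_{\mathcal{VC}}(\Gamma);\mathbb{P})\to H_n^\Gamma(pt;\mathbb{P})$$
is an isomorphism, where the right-hand side is the (stable) pseudoisotopy spectrum of $\Gamma$, whose homotopy groups in the appropriate range recover $\widetilde K_0(\mathbb Z\Gamma)$, $Wh(\Gamma)$ and the lower $K$-groups $K_{-i}(\mathbb Z\Gamma)$. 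So it suffices to show the left-hand side contributes nothing in those degrees.

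First I would invoke the hereditary property (Lemma \ref{heredi}): since the Fibered Isomorphism Conjecture holds for $\Gamma$, it holds for every virtually cyclic subgroup appearing as an isotropy group of $E_{\mathcal{VC}}(\Gamma)$, but more to the point one uses the standard transitivity/induction structure of the equivariant homology theory to identify $H_n^\Gamma(E_{\mathcal{VC}}(\Gamma);\mathbb{P})$ via an Atiyah–Hirzebruch type spectral sequence with $E^2$-term built from the Bredon homology of $E_{\mathcal{VC}}(\Gamma)$ with coefficients in the functor $V\mapsto \pi_*(\mathbb{P}(V))$ over virtually cyclic $V$. Because $\Gamma$ is torsion free, every virtually cyclic subgroup $V\leq\Gamma$ is either trivial or infinite cyclic; in either case $V\cong\mathbb Z^k$ for $k=0,1$, and by the Bass–Heller–Swan theorem together with the classical vanishing $Wh(\mathbb Z)=\widetilde K_0(\mathbb Z[\mathbb Z])=K_{-i}(\mathbb Z[\mathbb Z])=0$, the pseudoisotopy spectrum $\mathbb{P}(V)$ has vanishing homotopy in the degrees corresponding to $\widetilde K_0$, $Wh$ and $K_{-i}$, $i\geq 1$. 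Hence the relevant coefficient systems vanish, the spectral sequence collapses to zero in those degrees, and $H_n^\Gamma(E_{\mathcal{VC}}(\Gamma);\mathbb{P})=0$ there.

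Combining this with the isomorphism given by the conjecture yields $H_n^\Gamma(pt;\mathbb{P})=0$ in the same degrees, which is exactly the assertion that $Wh(\Gamma)$, $\widetilde K_0(\mathbb Z\Gamma)$ and $K_{-i}(\mathbb Z\Gamma)$ for $i\geq 1$ all vanish. The main obstacle — really the only nonroutine point — is the bookkeeping of which homotopy group of the pseudoisotopy spectrum corresponds to which $K$-group, and checking that the infinite cyclic isotropy groups genuinely contribute nothing; this is where one leans on the torsion freeness of $\Gamma$ (to avoid finite subgroups, whose pseudoisotopy/lower $K$-theory need not vanish) and on the Bass–Heller–Swan decomposition for $\mathbb Z[t,t^{-1}]$. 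Everything else is a formal consequence of the Fibered Isomorphism Conjecture plus the standard identification of the assembly map's target, so the argument is essentially a citation of \cite{L} once the vanishing of the source is in hand.
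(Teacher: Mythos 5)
Your argument is correct and is essentially the standard one that the paper invokes: its ``proof'' of this corollary is just a citation of [FJ, 1.6.5] (and [FR, Theorem D]), and your unpacking --- torsion freeness forces all isotropy groups of $E_{\mathcal{VC}}(\G)$ to be trivial or infinite cyclic, Bass--Heller--Swan kills the coefficients in the degrees computing $Wh$, $\tilde K_0$ and $K_{-i}$, so the source and hence the target of the assembly map vanish there --- is exactly the content of that reference. The only point to state with a little more care is the collapse claim: one needs that in the relevant (negative) total degrees $n=p+q$ every term has $q\leq n<0$ since $p\geq 0$, and it is precisely for $q<0$ that the coefficient groups $\pi_q$ of the pseudoisotopy spectrum of a trivial or infinite cyclic group vanish (the nonnegative homotopy groups certainly do not).
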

 
\begin{cor}\label{novikov} In addition to the hypothesis of the previous 
corollary, if the Isomorphism 
Conjecture in $L^{\l -\infty \r}$-theory is also true for the 
group $\G$ then the following holds. 

The following assembly map is an isomorphism 
for all $n$ and for $j=\l -\infty\r, h$ and $s$. $$H_n(B\G; {\bf
L}^{j}({\Bbb Z}))\to L^{j}_n({\Bbb
Z}\G).$$\end{cor}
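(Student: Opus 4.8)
The plan is to deduce Corollary \ref{novikov} from Corollary \ref{whitehead} together with the hypothesis that the Isomorphism Conjecture in $L^{\l-\infty\r}$-theory holds for $\G$, by passing between the various $L$-theory decorations via the Rothenberg exact sequences. Since $\G$ is torsion free and the Fibered Isomorphism Conjecture in pseudoisotopy theory is true for $\G$, Corollary \ref{whitehead} gives $Wh(\G)=\tilde K_0({\Bbb Z}\G)=K_{-i}({\Bbb Z}\G)=0$ for all $i\geq 1$. The Rothenberg sequences relate $L^s$, $L^h$ and the lower-decorated groups $L^{\l-\infty\r}$ through Tate cohomology of ${\Bbb Z}/2$ acting on these $K$-groups; since all the relevant $K$-groups vanish, the Rothenberg sequences collapse and all decorations of $L_n(\G)$ coincide. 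The same argument applies on the homology side, because $H_n(B\G;{\bf L}^j({\Bbb Z}))$ for the various $j$ are likewise related by Rothenberg-type sequences built from the (vanishing) $K$-theory of ${\Bbb Z}$, or more directly because the relative term is a generalized homology theory whose coefficient spectrum is built from the same vanishing Tate cohomology.

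Concretely, I would first record that the Isomorphism Conjecture in $L^{\l-\infty\r}$-theory for $(\G,{\cal {VC}})$ combined with torsion freeness of $\G$ (so that ${\cal {VC}}(\G)$ consists of the trivial and infinite cyclic subgroups, for which the assembly map is already known to be an isomorphism by Farrell--Hsiang type results, exactly as in the proof of Theorem \ref{ic}) yields that the classical assembly map $H_n(B\G;{\bf L}^{\l-\infty\r}({\Bbb Z}))\to L^{\l-\infty\r}_n({\Bbb Z}\G)$ is an isomorphism. This is the standard reduction from the $E_{\cal {VC}}(\G)$ formulation to the $B\G=K(\G,1)$ formulation, using that the sources and targets agree on virtually cyclic subgroups. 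Then I would invoke the commutative ladder of Rothenberg exact sequences: the map of long exact sequences comparing the $j=\l-\infty\r$ assembly map with the $j=h$ assembly map, whose relative terms are assembly maps for a homology theory with coefficients the Tate cohomology spectrum $\hat H({\Bbb Z}/2;\tilde K_0({\Bbb Z}\G))$-type object, which is contractible here. A five-lemma argument then transports the isomorphism from $j=\l-\infty\r$ to $j=h$, and a second application of the same argument (now the relative term involves $Wh(\G)=0$) transports it from $j=h$ to $j=s$.

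The main technical point — and the step I would be most careful about — is the precise form of the Rothenberg sequence for the \emph{homology} side and the identification of the relative terms with something that visibly vanishes under our hypotheses. The cleanest route is to note that for each pair of adjacent decorations the relative spectrum is a module built from $\hat H^*({\Bbb Z}/2;-)$ applied to $\widetilde K_*$ of the relevant integral group ring (or of ${\Bbb Z}$ itself on the homology side), so both the $L_n({\Bbb Z}\G)$ tower and the $H_n(B\G;{\bf L}^\bullet)$ tower have vanishing successive quotients; naturality of assembly then gives the comparison. Alternatively, one can cite \cite{Ran} for the algebraic Rothenberg sequences and the identification of the relevant $\widehat H$-terms, exactly as was done (for the group and its subgroups) in the proof of Theorem \ref{ic}. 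Everything else is formal: with all decorations identified on both sides and the $\l-\infty\r$ assembly map an isomorphism, the displayed map $H_n(B\G;{\bf L}^j({\Bbb Z}))\to L^j_n({\Bbb Z}\G)$ is an isomorphism for $j=\l-\infty\r,h,s$ and all $n$, which is the assertion of Corollary \ref{novikov}.
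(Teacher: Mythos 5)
Your proposal is correct and follows essentially the same route as the paper: first obtain the isomorphism of the classical assembly map over $B\G$ for the decoration $\l-\infty\r$ (the paper simply cites [\cite{FJ}, 1.6.1] for this reduction, where you spell out the underlying point about torsion freeness and virtually cyclic subgroups), and then transport it to the $h$ and $s$ decorations via the Rothenberg exact sequences, the vanishing of $Wh(\G)$, $\tilde K_0({\Bbb Z}\G)$ and $K_{-i}({\Bbb Z}\G)$ from Corollary \ref{whitehead}, and a five lemma argument. Your extra care about the homology-side Rothenberg ladder is sound but not an issue here, since the corresponding $K$-groups of ${\Bbb Z}$ vanish and all decorations of the coefficient spectrum agree.
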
 

Note that the above two Corollaries give further evidence to the 
Whitehead Conjecture and the integral Novikov Conjecture 
respectively. The Whitehead Conjecture says that the 
Whitehead group of any torsion free group vanishes. And 
the integral Novikov Conjecture says that the above 
assembly map is split injective for torsion free groups.

Corollaries \ref{whitehead} and \ref{novikov} together imply 
the following.

\begin{cor}\label{borel} ({\bf Generalized Borel Conjecture}) 
Let $M$ be a closed aspherical manifold with $\pi_1(M)$ 
isomorphic to $G$ where $G$ satisfies the Fibered Isomorphism Conjecture 
for the pseudoisotopy and the $L$-theory cases. Then 
$M\t {\Bbb D}^k$ satisfies the Borel Conjecture for 
dim$(M)+k\geq 5$. That is, if $f:N\to M\t {\Bbb D}^k$ is a 
homotopy equivalence from another compact manifold so that 
$f|_{\p N}:{\p N}\to M\t {\Bbb S}^{k-1}$ is a homeomorphism, 
then $f$ is homotopic, relative to boundary, to a 
homeomorphism.\end{cor}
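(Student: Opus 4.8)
The plan is to combine the two preceding corollaries via the standard surgery-theoretic translation into a statement about the structure set. First I would recall that $M\times \mathbb{D}^k$ is a compact aspherical manifold with boundary $M\times \mathbb{S}^{k-1}$, and that its fundamental group is $\pi_1(M)\simeq G$, which satisfies the Fibered Isomorphism Conjecture in both the pseudoisotopy and $L$-theory cases by hypothesis. Since $G$ is the fundamental group of a closed aspherical manifold, it is torsion free, so Corollary \ref{whitehead} applies and gives $Wh(G)=\tilde K_0(\mathbb{Z}G)=0$ together with the vanishing of the negative $K$-groups $K_{-i}(\mathbb{Z}G)$ for $i\ge 1$.

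Next I would invoke Corollary \ref{novikov}: under the same hypotheses the assembly map $H_n(BG;\mathbf{L}^j(\mathbb{Z}))\to L^j_n(\mathbb{Z}G)$ is an isomorphism for all $n$ and for $j=\langle-\infty\rangle, h, s$. The key point is that the vanishing of $Wh(G)$, $\tilde K_0(\mathbb{Z}G)$ and the negative $K$-groups means, via the Rothenberg exact sequences relating the decorations, that all these decorated $L$-groups coincide, so the isomorphism of the assembly map with any one decoration (say $\langle-\infty\rangle$) forces it with the $s$-decoration as well; it is precisely the $s$-decoration that governs the surgery exact sequence detecting homeomorphisms. Then I would run the surgery exact sequence for the pair $(M\times\mathbb{D}^k, M\times\mathbb{S}^{k-1})$: in dimension $\dim(M)+k\ge 5$ the structure set $\mathcal{S}^{TOP}(M\times\mathbb{D}^k\ \mathrm{rel}\ \partial)$ sits in an exact sequence with the normal invariants $[M\times\mathbb{D}^k/\partial, G/TOP]\cong H_n(M\times\mathbb{D}^k,\partial;\mathbf{L}(\mathbb{Z}))$ and the $L$-groups $L_n^s(\mathbb{Z}G)$. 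Because $M\times\mathbb{D}^k$ is homotopy equivalent (rel nothing) to $BG=K(G,1)$ up to the boundary collapse, the assembly map being an isomorphism on both adjacent terms forces $\mathcal{S}^{TOP}(M\times\mathbb{D}^k\ \mathrm{rel}\ \partial)$ to be trivial (a single point) by the five lemma. A trivial rel-boundary structure set says exactly that any homotopy equivalence $f:N\to M\times\mathbb{D}^k$ restricting to a homeomorphism on $\partial N\to M\times\mathbb{S}^{k-1}$ is homotopic rel boundary to a homeomorphism, which is the assertion.

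The main obstacle I expect is bookkeeping with the decorations and with the relative (rel boundary) versions of the assembly maps: Corollaries \ref{whitehead} and \ref{novikov} are stated for the absolute case over $B\Gamma$, and one must check that the aspherical-with-boundary space $M\times\mathbb{D}^k$, whose rel-boundary surgery problem is what enters the Borel statement, is correctly handled — essentially because $(M\times\mathbb{D}^k)/(M\times\mathbb{S}^{k-1})$ is homotopy equivalent to a suspension-type space with the same fundamental group $G$, and the $\mathbf{L}$-homology and $L$-groups are the relevant ones. Once the identification $\mathcal{S}^{TOP}(M\times\mathbb{D}^k\ \mathrm{rel}\ \partial)=*$ is in place, the conclusion is immediate; the dimension hypothesis $\dim(M)+k\ge 5$ is needed only to guarantee that the surgery exact sequence is valid and that the relevant $\pi$-$\pi$ and Whitney-trick arguments apply.
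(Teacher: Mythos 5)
Your proposal is correct and follows essentially the same route as the paper: feed Corollaries \ref{whitehead} and \ref{novikov} into the surgery exact sequence for $M\t {\Bbb D}^k$ rel boundary (the paper packages this via Ranicki's algebraic surgery sequence and the identification ${\cal S}_{n+k+1}(M)={\cal S}^{\text{Top}}(M\t {\Bbb D}^k,\p(M\t {\Bbb D}^k))$) and conclude the structure set is a point. The one point you defer to ``bookkeeping'' that the paper makes explicit is the comparison between the $1$-connective spectrum $\ul {\Bbb L}_0$ computing the normal invariants $[X;G/TOP]$ and the periodic spectrum ${\bf L}({\Bbb Z})$ for which Corollary \ref{novikov} gives the assembly isomorphism: the map $H_k(X;\ul {\Bbb L}_0)\to H_k(X;{\bf L}({\Bbb Z}))$ is an isomorphism for $k>n$ and only injective for $k=n$, which is exactly what is needed (surjectivity of assembly in degree $n+k+1$, injectivity in degree $n+k$) to kill the structure set.
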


Finally we recall that, in our earlier works together with the 
present article we have 
proved the Fibered Isomorphism Conjecture both for the pseudoisotopy 
and for the $L^{-\l \infty \r}$-theory for a large class of groups. 

Below we sketch the proofs of the above corollaries.

The arguments for the proofs of Corollaries \ref{whitehead} and 
\ref{novikov} and Theorem \ref{borel} are already 
there in the literature. We 
briefly recall the proofs and then refer to the 
original sources. 

\begin{proof}[Proof of Corollary \ref{whitehead}] This is 
a consequence of the Fibered Isomorphism Conjecture 
in stable topological pseudoisotopy theory. 
See [\cite{FJ}, 1.6.5] for details. Also see [\cite{FR}, 
Theorem D].\end{proof}

\begin{proof}[Proof of Corollary \ref{novikov}]  
The Isomorphism Conjecture in $L^{\l-\infty\r}$-
theory for torsion free groups implies the isomorphism 
of the assembly map $$H_n(B\G; {\bf
L}^{\l j\r}({\Bbb Z}))\to L^{\l j\r}_n({\Bbb
Z}\G)$$ for $j=-\infty$. See [\cite{FJ}, 1.6.1] 
for details. 

Now recall the following Rothenberg exact sequence. 
$$\cdots\to L^{\l i+1\r}_n(R)\to L^{\l i\r}_n(R)\to \hat{H}^n({\Bbb Z}/2; 
\tilde K_i(R))$$$$\to L^{\l i+1\r}_{n-1}(R)\to L^{\l i\r}_{n-1}(R)\to\cdots .$$
Where $R={\Bbb Z}\G$ and $i\leq 1$. Recall that $L_n^{\l1\r}=L_n^h$ and 
$L_n^{\l-\infty\r}$ is the limit of $L_n^{\l i\r}$. 
Now using Corollary \ref{whitehead} 
and by a Five Lemma argument we get the isomorphism of the assembly 
map $$H_n(B\G; {\bf
L}^{h}({\Bbb Z}))\to L^{h}_n({\Bbb
Z}\G).$$ Using a similar Rothenberg exact 
sequence which connects the surgery groups 
with $h$ and $s$ decorations and the Tate cohomology 
which appears is with coefficient in the 
Whitehead group, one can show the following isomorphism. 
$$H_n(B\G; {\bf
L}^{s}({\Bbb Z}))\to L^{s}_n({\Bbb
Z}\G).$$ See [\cite{LR}, Section 1.5] for 
details and for other related features.\end{proof}

\begin{proof}[Proof of Corollary \ref{borel}] Let us first recall 
the surgery exact sequence. This sequence 
is for simple homotopy types and for the surgery groups 
with the decoration `$s$'. Since the 
Whitehead group of the group $G$ in the present situation 
vanishes, there is no difference between 
`$s$' and `$h$' and therefore we do not use any decoration.
 
$$\cdots\to H_n(X; \ul {\Bbb L}_0)\to L_n(\pi_1(X))\to 
{\cal S}_n(X)\to H_{n-1}(X;\ul {\Bbb L}_0)\to\cdots.$$

Where ${\cal S}_*(-)$ is the total surgery obstruction groups 
of Ranicki and $\ul {\Bbb L}_0$ is 
a $1$-connective $\Omega$-spectrum with $0$-space homotopically 
equivalent to $G/TOP$. If $X$ is a compact 
$n$-dimensional manifold ($n\geq 5$) 
then the following part of the 
above surgery exact sequence

$$\cdots\to {\cal S}_{n+2}(X) \to H_{n+1}(X; \ul 
{\Bbb L}_0)\to L_{n+1}(\pi_1(X))$$$$\to
{\cal S}_{n+1}(X)\to H_n(X;\ul {\Bbb L}_0)\to L_n(\pi_1(X))$$
 
is identified with the original surgery exact sequence

$$\cdots\to {\cal S}^{\text{Top}}(X\t {\Bbb D}^1, \p (X\t {\Bbb D}^1))\to 
[X\t {\Bbb D}^1, \p (X\t {\Bbb D}^1);G/TOP, *]$$$$\to L_{n+1}(\pi_1(X))\to 
{\cal S}^{\text{Top}}(X)\to [X; G/TOP]\to L_n(\pi_1(X)).$$ 

In particular,  
for an $n$-dimensional closed manifold $X$ there is the following 
identification. 
$${\cal S}_{n+k+1}(X)={\cal S}^{\text{Top}}(X\t {\Bbb D}^k, 
\p (X\t {\Bbb D}^k)).$$

Here ${\cal S}^{\text{Top}}(P, \p P)$ denotes the structure 
set of a compact manifold $P$. When
$Wh(\pi_1(P))=(1)$ and dim$(P)\geq 5$ 
(which is the case in the present situation) the structure set 
can be defined in the following simpler way. 
${\cal S}^{\text{Top}}(P, \p P)$ 
is the set of all equivalence classes of 
homotopy equivalences 
$f:(N, \p N)\to (P, \p P)$ from 
compact manifolds 
$(N, \p N)$ so that $f|_{\p N}:\p N\to \p P$ 
is a homeomorphism. Here two such maps 
$f_i:(N_i, \p N_i)\to (P, \p P)$ for $i=1,2$ are 
said to be equivalent if there is a homeomorphism 
$h:(N_1, \p N_1)\to (N_2, \p N_2)$ 
so that $f_2\circ h$ is homotopic to $f_1$ relative 
to boundary, that is during the homotopy the 
map on the boundary is constant. 

Next, there is a homomorphism $H_k(X; \ul {\Bbb L}_0)\to 
H_k(X; {\bf L}({\Bbb Z}))$ which is an isomorphism 
for $k>n$ and is injective for $k=n$.

Now using the fact that $M$ is aspherical and applying  
Corollaries \ref{whitehead} and \ref{novikov} 
we see that ${\cal S}^{\text{Top}}(M\t {\Bbb D}^k)$ contains 
only one element for $n+k\geq 5$. 
This completes the proof of Corollary \ref{borel}.

For some more details with related references see 
[\cite{LR}, Theorem 1.28] or [\cite{FJ}, 1.6.3].\end{proof}

\begin{rem}\label{finalrem}{\rm In view of the footnote in  
[\cite{R1}, introduction] we finally remark that [\cite{FJ}, Remark 2.1.3]  
is used in this paper in the following statements: $B(ii)$ of 
Theorem 1.3; $A$ and $B$ of Theorem 1.4; $D$ of Theorem 1.4 
when the vertex groups of any component subgraph has rank $>1$. 
The work on completing the proof 
of [\cite{FJ}, Remark 2.1.3] is in \cite{BFL}.}\end{rem}

\noindent
{\bf Acknowledgement.} I would like to thank F.T. Farrell for some 
helpful e-mail communications. Also I am grateful to Wolfgang L\"{u}ck for 
pointing out an error in a preprint which initiated some of the 
results in this  paper.

\newpage
\bibliographystyle{plain}
\ifx\undefined\bysame
\newcommand{\bysame}{\leavevmode\hbox to3em{\hrulefill}\,}
\fi

\end{document}